\documentclass[reqno,a4paper,12pt]{amsart}

\numberwithin{equation}{section}
\usepackage[utf8]{inputenc}
\usepackage{amsmath, amsthm, amssymb, amscd, accents, bm, amsfonts}
\usepackage{mathtools}
\usepackage{url}
\usepackage{mathrsfs,dsfont}
\usepackage{mathtools}
\usepackage{mdwlist}
\usepackage{paralist}
\usepackage{datetime}
\usepackage{hyperref}
\usepackage{colonequals}
\usepackage{xcolor}
\usepackage[sort,nocompress]{cite}

\mathtoolsset{showonlyrefs}

\newtheorem{definition}{Definition}[section]
\newtheorem{theorem}[definition]{Theorem}
\newtheorem{lemma}[definition]{Lemma}
\newtheorem{corollary}[definition]{Corollary}
\newtheorem{proposition}[definition]{Proposition}

\def\R{{\mathbb R}}

\allowdisplaybreaks

\begin{document}

\title[Cheeger's isoperimetric problem for Gaussian mixtures]{Cheeger's isoperimetric problem for \\ Gaussian mixtures}

\author[Lukas Liehr]{Lukas Liehr}
\address{Department of Mathematics, Bar-Ilan University, Ramat-Gan 5290002, Israel}
\email{lukas.liehr@biu.ac.il}

\date{\today}
\subjclass[2020]{49J40, 49Q20, 60E15}
\keywords{Cheeger constant, isoperimetric problem, geometric measure theory}

\begin{abstract}
In any dimension $n$, we determine the Cheeger constant and the Cheeger sets of the Gaussian mixture $$\mu(x) = p\gamma(x-a) + (1-p)\gamma(x-b),$$ where $p \in [0,1]$, $a,b \in \R^n$, and $\gamma : \mathbb{R}^n \to (0,\infty)$ denotes a Gaussian. In particular, we characterize the Cheeger sets for $\mu$ in terms of specific half-spaces perpendicular to $a-b$, thereby confirming the conjectured solution to the Cheeger problem for Gaussian mixtures. Finally, we study the regime of parameters $p,a,b$ in which $\mu$ admits a unique Cheeger set.
\end{abstract}

\maketitle

\section{Introduction and results}

Let $\mu$ be a probability measure on $\R^n$ equipped with the Borel $\sigma$-algebra $\mathcal{B}(\R^n)$. Given $A \in \mathcal{B}(\R^n)$, we denote by $\mu_+(A)$ its lower Minkowski content,
$$
\mu_+(A) = \liminf_{\varepsilon \to 0} \frac{\mu(A^\varepsilon \setminus A)}{\varepsilon},
$$
where $A^\varepsilon = \{ x \in \R^n : \mathrm{dist}(A,x)<\varepsilon \}$ denotes the open $\varepsilon$-neighborhood of $A$.
When $\mu$ has a smooth density $w$ with respect to Lebesgue measure, then $\mu_+(A)$ is a natural notion of perimeter with density, and for sets $A$ with sufficiently regular boundary $\partial A$, the quantity $\mu_+(A)$ coincides with the weighted integral 
$\int_{\partial A} w \, d\mathcal H^{n-1}$ where $\mathcal H^{n-1}$ denotes the $n-1$-dimensional Hausdorff measure \cite{maggi2012sets,ambrosio2016minkowski}.
We say that $\mu$ satisfies Cheeger's inequality if there exists a constant $C>0$ such that for every $A \in \mathcal{B}(\R^n)$ one has
$$
\min \{ \mu(A), 1-\mu(A) \} \leq C \mu_+(A).
$$
The Cheeger constant of $\mu$ is defined by
$$
h_\mu = \inf_{\substack{A \in \mathcal{B}(\R^n) \\ 0<\mu(A)<1}} \frac{\mu_+(A)}{\min \{ \mu(A), 1-\mu(A) \}}.
$$
Thus $\mu$ satisfies Cheeger's inequality precisely when $h_\mu>0$, and in that case the optimal constant in Cheeger's inequality is given by $h_\mu^{-1}$.

If this infimum is attained at some set $S$ then we call $S$ a Cheeger set. The reciprocal of $h_\mu$ yields the optimal constant for the $L^1$-Poincaré inequality on $\R^n$: for every Lipschitz function $f$ one has
$$
\min_{c \in \R} \int_{\R^n} |f-c| \, d\mu \leq C_1 \int_{\R^n} |\nabla f| \, d\mu,
$$
with optimal constant $C_1 = \frac{1}{h_\mu}$. The optimal constant $C_2$ for the corresponding $L^2$-Poincaré inequality satisfies the inequality $C_2 \leq 4/h_\mu^2$ \cite{klartag2025isoperimetric}.

The problem of determining the Cheeger constant and the Cheeger sets of a measure $\mu$ is closely related to the isoperimetric problem. The latter consists in identifying, for each prescribed volume $v \in [0,1]$, those sets $A$ with $\mu(A)=v$ that minimize $\mu_+(A)$. This gives rise to the isoperimetric function of $\mu$, which is defined by
$$
I_\mu : [0,1] \to [0,\infty), \quad I_\mu(v) = \inf \{ \mu_+(A) : A \in \mathcal{B}(\R^n), \, \mu(A) = v \}.
$$
In terms of the isoperimetric function one has the equivalent representation of the Cheeger constant via
\[
h_\mu=\inf_{v\in(0,1)}\frac{I_\mu(v)}{\min\{v,1-v\}}.
\]
Solutions to the Cheeger and isoperimetric problem are only known for special types of measures, including log-concave measures. In this case, half-spaces typically turn out to be solutions to the isoperimetric problem, see \cite{klartag2025isoperimetric,bobkov1997some} and the references therein. A classical example of a log-concave probability measure is the Gaussian measure, which has density with respect to the Lebesgue measure given by
$$
\gamma : \R^n \to (0,\infty), \quad \gamma(x) = \frac{1}{(2\pi)^{\frac{n}{2}}} e^{-\frac{1}{2}|x|^2}.
$$
Denoting the Borel measure corresponding to the latter density also by $\gamma$, one has $I_\gamma = \varphi \circ \Phi^{-1}$ where
$$
\Phi(x) = \frac{1}{\sqrt{2\pi}}\int_{-\infty}^x e^{-\frac{1}{2}t^2} \, dt, \quad \varphi = \Phi'.
$$
Together with the fact that $\Phi$ is log-concave on the whole real line, one obtains that the Cheeger constant for $\gamma$ is given by $h_\gamma = \sqrt{2/\pi}$ and the Cheeger sets for $\gamma$ are half-spaces whose boundary intersect the origin \cite{bobkov1997some}. We also refer to \cite{fusco2011isoperimetric,BROCK2016375} for further results related to the isoperimetric problem of Gaussian-type measures, and to \cite{betta2008,canete2010,rosales2008,chambers2019,marini,Betta1999_2} for studies on isoperimetric problems for measures with density.

If $\mu$ is not log-concave then the problem of determining the Cheeger constant and the Cheeger sets for $\mu$ becomes mostly intractable. In the present paper we investigate the Cheeger problem for non-log-concave probability measures that are Gaussian mixtures of the form
\begin{equation}\label{eq:measure}
    \mu(x) = p\gamma(x-a) + (1-p)\gamma(x-b)
\end{equation}
where $p \in [0,1]$ and $a,b \in \R^n$. The study of the Cheeger problem for measures of the form \eqref{eq:measure} is motivated by a series of papers in the literature on isoperimetric problems, PDEs, and inverse problems: in \cite{schlichting,chafaiMalrieu} the authors establish (non-sharp) Poincaré inequalities with respect to measures of the form \eqref{eq:measure}. In \cite[Section 5, Question 6]{canete2010} it is conjectured that half-spaces are solutions to the isoperimetric problem for Gaussian mixtures of type \eqref{eq:measure}. In \cite[Theorem 3.16]{berry2018}, the latter conjecture was proved affirmatively in the one-dimensional symmetric case, i.e., $n=1$ and $p = \frac{1}{2}$. Moreover, \cite{berry2018} provides further evidence of the conjecture in the planar case $n=2$. In \cite{grohsRathmair,grohs2021stables} the Cheeger constant was used to establish stability estimates for certain non-linear inverse problems (related developments and extensions appear in the recent works \cite{alaifari2025cheeger,fuhr2025cheeger}). Here, the value $h_\mu$ serves as a quantification of the disconnectedness of $\mu$. Based on heuristics and numerical observations it is suggested in \cite{grohsRathmair} that certain sets whose boundary represent an optimal Cheeger cut positioned in between the two means $a$ and $b$ (such as a straight cut which corresponds to sets that are half-spaces) are Cheeger sets for the measure in \eqref{eq:measure}.

\subsection{Main results}
In order to state the main results of the paper, we introduce for $\alpha \in [0,1]$ and $\beta \geq 0$ the function $Q_{\alpha,\beta} : \R \to (0,\infty)$ via
$$
Q_{\alpha,\beta}(x) = \alpha \Phi(x) + (1-\alpha)\Phi(x-\beta).
$$
Moreover, we define for \(c\in\mathbb R\) and a unit vector \(\nu \in \mathbb S^{n-1}\) the half-spaces
$$
H_{c,\nu}^- := \{x\in\mathbb R^n:\ x\cdot \nu < c\}, \quad H_{c,\nu}^+ :=\{x\in\mathbb R^n:\ x\cdot \nu > c\}.
$$
Notice that if $a=b$ or $p \in \{ 0,1 \}$ in \eqref{eq:measure} then $\mu$ coincides up to translation with $\gamma$, for which the solution of the Cheeger problem is known. We therefore exclude these values in the following statement. We also remark that whenever we say two Borel sets $A,B$ satisfy $A=B$ then equality is meant up to $\mu$-null sets, i.e., the symmetric difference of $A$ and $B$ satisfies $\mu(A\Delta B) = 0$.

\begin{theorem}\label{thm:main1}
Let \(a,b\in\mathbb R^n\) with $a \neq b$, let \(p\in(0,1)\), and let $\mu$ be the measure
\[
\mu(x) = p\,\gamma(x-a)+(1-p)\,\gamma(x-b).
\]
Define $d :=|a-b|$, $m := \min \{ p,1-p \}$, and $r^* > 0$ via the equality \(Q_{m,d}(r^*)=\tfrac12\). Then the Cheeger constant of \(\mu\) is given by the minimum
\[
h_\mu=\min_{r\in[0,r^*]} \bigl(\log Q_{m,d}\bigr)'(r).
\]
If $\mathcal{O} \subset [0,r^*]$ is the set of minimizers and if $\nu := \frac{b-a}{|b-a|}$, then the following holds:
\begin{enumerate}
    \item If $p \leq \frac{1}{2}$, then a Borel set $E$ is a Cheeger set for $\mu$ if and only if $E = H_{a\cdot \nu+t,\nu}^\pm$ for some $t \in \mathcal{O}$.
    \item If $p > \frac{1}{2}$, then a Borel set $E$ is a Cheeger set for $\mu$ if and only if $E = H_{- b\cdot \nu+t,-\nu}^\pm$ for some $t \in \mathcal{O}$.
\end{enumerate}
\end{theorem}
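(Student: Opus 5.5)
We reduce to one dimension using the product structure of $\mu$, and then solve a one-dimensional variational problem. Since the Cheeger problem is invariant under isometries, we may apply a rigid motion and assume $a=0$ and $b=d e_1$. If $p>\tfrac12$ we additionally reflect $x_1\mapsto d-x_1$, which interchanges the roles of $a$ and $b$; this reflection produces the description in item (2), so it suffices to treat $p\le\tfrac12$, i.e.\ $m=p$. Writing $x=(x_1,x')\in\R\times\R^{n-1}$ and $q:=Q_{m,d}'$, we have
\[
\mu = \nu\otimes\gamma_{n-1},\qquad \nu(x_1)=m\varphi(x_1)+(1-m)\varphi(x_1-d)=q(x_1).
\]
Thus $\mu$ is Gaussian in the $x'$-directions, and all the non-log-concavity sits in the one-dimensional factor $\nu$, whose distribution function is $Q_{m,d}$.

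\emph{Upper bound.} For the half-space $H^-_{t,e_1}=\{x_1<t\}$ we have $\mu(H^-_{t,e_1})=Q_{m,d}(t)$ and $\mu_+(H^-_{t,e_1})=q(t)$. If $t\in[0,r^*]$, then $Q_{m,d}(t)\le\tfrac12$, so the Cheeger ratio of this half-space equals $(\log Q_{m,d})'(t)$, and the same holds for its complement $H^+_{t,e_1}$. Hence $h_\mu\le\min_{[0,r^*]}(\log Q_{m,d})'$. For the lower bound, the first step is a one-dimensional lemma. Among all cuts $\{x_1<t\}$, $t\in\R$, and their complements, the ratio $q(t)/\min\{Q_{m,d}(t),1-Q_{m,d}(t)\}$ is minimized only at $t\in\mathcal O$. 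To prove this, I would verify three facts:
\begin{enumerate}
\item $(\log Q_{m,d})'$ exceeds its value at $0$ on $(-\infty,0)$, using that the component with weight $m$ (the smaller one) sits at the origin;
\item the ratio $q/(1-Q_{m,d})$ on $[r^*,\infty)$ is at least its value at $r^*$;
\item in particular, the side of the lighter component never wins.
\end{enumerate}
These are explicit calculus estimates on $\varphi$ and $\Phi$, relying on the Mills-ratio monotonicity of $\varphi/\Phi$.

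\emph{Lower bound in $\R^n$.} Given a set $A$, I would apply Ehrhard (Gaussian) symmetrization in the $x'$-variables on each fiber $\{x_1\}\times\R^{n-1}$. Each fiber of $A$ is replaced by the half-space $\{x_2<\Phi^{-1}(f(x_1))\}$, where $f(x_1)=\gamma_{n-1}(A_{x_1})$. This preserves $\mu(A)$ and does not increase $\mu_+$ on $\nu\otimes\gamma_{n-1}$, because Gaussian symmetrization in a Gaussian factor is compatible with an arbitrary weight in the remaining variable. We are then left with a two-dimensional problem for $\nu\otimes\gamma_1$ and subgraph sets $\{y<g(x_1)\}$, with $g=\Phi^{-1}\circ f$. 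The target inequality is
\[
\int \nu(x_1)\,\varphi(g(x_1))\sqrt{1+g'(x_1)^2}\,dx_1\ \ge\ h\min\Bigl\{\int\nu\,\Phi(g),\,1-\int\nu\,\Phi(g)\Bigr\}.
\]
To prove it, I would drop the tangential part using $\sqrt{1+g'^2}\ge\max(1,|g'|)$ and then apply the coarea formula in the $x_1$-direction to the function $f=\Phi\circ g$. This gives
\[
\mu_+(A)\ \ge\ \int_0^1 \mu_+^{\nu}\bigl(\{f>s\}\bigr)\,ds ,
\]
where $\mu_+^{\nu}$ is the one-dimensional perimeter of the superlevel sets of $f$. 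Each superlevel set $\{f>s\}$ is a union of intervals. Combined with the one-dimensional Cheeger inequality for $\nu$ (proved first for unions of intervals, reducing to single half-lines by the lemma), this yields $\mu_+(A)\ge h\min\{\mu(A),1-\mu(A)\}$. The min must be handled by a standard truncation, taking the median level of $f$.

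\emph{Equality cases and the main obstacle.} Equality forces three things: equality in Ehrhard symmetrization up to null sets, $g'\equiv0$ (so $f$ is an indicator function), and equality in the one-dimensional lemma. Together these show that $A$ is a half-space $\{x_1<t\}$ or $\{x_1>t\}$ with $t\in\mathcal O$, which after undoing the rigid motion is $H^\pm_{a\cdot\nu+t,\nu}$. I expect the main obstacle to be the one-dimensional lemma, specifically the claim that for $\nu$ a set consisting of several intervals, or a half-line on the heavy side beyond $r^*$, cannot beat a single cut in $[0,r^*]$. Since $\nu$ is not log-concave, its isoperimetric profile need not be concave. I would first try to show that $I_\nu(v)/\min\{v,1-v\}$ is governed by half-lines, by comparing a bounded interval $(s,t)$ to the better of its two complementary half-lines. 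I would also reuse the symmetric case of \cite[Theorem 3.16]{berry2018} as a template.
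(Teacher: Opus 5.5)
There is a genuine gap in your lower bound for the symmetrized problem. Your upper bound, the reduction to $p\le\frac12$, and the inequality part of your fiberwise Ehrhard symmetrization are sound; the last follows from the Gaussian enlargement inequality on each fiber.

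\textbf{The gap.} With $f=\Phi\circ g$ you have $I_\gamma(f)=\varphi(g)$ and $f'=\varphi(g)g'$, so the subgraph perimeter is $\int\nu\sqrt{I_\gamma(f)^2+f'^2}\,dx_1$. You then discard $I_\gamma(f)$, which is the Gaussian perimeter of the fibers, and keep only $\int\nu|f'|\,dx_1=\int_0^1\mu_+^{\nu}(\{f>s\})\,ds$. That bound cannot give the Cheeger inequality. Take $n\ge2$ and $A=\{x_2<0\}$. Then $g\equiv0$ and $f\equiv\frac12$, so every superlevel set of $f$ is $\emptyset$ or $\R$ and your lower bound is $0$. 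Yet $\min\{\mu(A),1-\mu(A)\}=\frac12$, and the true ratio of $A$ is $\sqrt{2/\pi}$.

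Structurally, the one-dimensional Cheeger inequality plus median truncation only gives $\int\nu|f'|\,dx_1\ge h\int|f-M|\,d\nu$, with $M$ a median of $f$. It does not give $h\min\{\int f\,d\nu,1-\int f\,d\nu\}$. Since $v\mapsto\min\{v,1-v\}$ is concave, Jensen gives $\int_0^1\min\{\nu(\{f>s\}),1-\nu(\{f>s\})\}\,ds\le\min\{\int f\,d\nu,1-\int f\,d\nu\}$, which is the wrong direction.

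What you actually need is the coupled, sharp inequality $\int\nu\sqrt{I_\gamma(f)^2+f'^2}\,dx_1\ge h\min\{\int f\,d\nu,1-\int f\,d\nu\}$ for all $f:\R\to[0,1]$. This is a Bobkov-type inequality for the non-log-concave $\nu$, it is essentially the whole problem, and nothing in your sketch addresses it.

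Your equality analysis has the same confusion. $g'\equiv0$ means $f$ is constant, as for $\{x_2<c\}$, not an indicator. The half-spaces $\{x_1<t\}$ correspond to $g=\pm\infty$, i.e. vertical walls outside your graph formula. Ruling out fibers with $0<f<1$ is exactly the job of the term $I_\gamma(f)$ that you dropped. Separately, the one-dimensional lemma for unions of intervals, which you flag as the main obstacle, is left unproved, so even $n=1$ is open in your plan.

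\textbf{How the paper avoids this.} It uses no symmetrization. It writes $\mu_+(E)\ge p\gamma_+(E)+(1-p)\gamma_+(E-\mathbf d)$ and applies the Gaussian isoperimetric inequality to each component. The two volumes are coupled by the Li--Kuelbs shift inequality: $\gamma(E)=\Phi(t)$ implies $\gamma(E-\mathbf d)\ge\Phi(t-d)$, with equality only for $H_t^-$. Borel sets are then split by whether $\gamma(E)$ and $\gamma(E-\mathbf d)$ lie above or below $\frac12$:
\begin{itemize}
\item In the two ``same side'' classes, the Gaussian Cheeger inequality gives a ratio of at least $\sqrt{2/\pi}$, which is strictly larger than $2Q'(r^*)$.
\item In the two mixed classes, half-spaces orthogonal to $e_1$ are the restricted isoperimetric minimizers.
\item Strict log-concavity of $x\mapsto\frac{1-p}{p}\Phi(x)+\Phi(x-d)$ on $[0,\frac d2]$ disposes of the cuts $H_r^+$ with $r\in(r^*,d]$.
\end{itemize}
This argument works verbatim for $n=1$, so the one-dimensional statement you were worried about is a consequence of the theorem rather than an input to it.
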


The above theorem states that, in any dimension $n$, the Cheeger constant of $\mu$ is given by the minimum of the logarithmic derivative of $Q_{m,d}$ (a $C^\infty$ function) on the compact interval $[0,r^*]$, where $r^*$ is the median of the measure on the real line given by $m\gamma(x) + (1-m)\gamma(x-d)$. In particular, $h_\mu$ is dimension-free and depends only on the distance $d=|a-b|$ and the minimum $m=\min\{p,1-p\}$. Moreover, uniqueness of the Cheeger set up to complement corresponds to $\mathcal O$ being a singleton. In the symmetric case $p=\frac12$, this uniqueness property is achieved and an explicit formula for the Cheeger constant and the Cheeger sets is proved.

\begin{theorem}\label{thm:main2}
For every $a,b \in \R^n$, the Cheeger constant of the symmetric Gaussian mixture
$
\mu(x) = \frac12 \,\gamma(x-a)+ \frac12 \,\gamma(x-b)
$
is given by
    $$
    h_\mu = \sqrt\frac{2}{\pi} e^{-\frac{1}{8}|a-b|^2}.
    $$
    Moreover, $E$ is a Cheeger set for $\mu$ if and only if $E = H_{a\cdot \nu+t,\nu}^-$ or $E = H_{a\cdot \nu+t,\nu}^+$, where $\nu = \frac{b-a}{|b-a|}$ and $t = \frac{1}{2}|b-a|$.
\end{theorem}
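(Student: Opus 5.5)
Theorem~\ref{thm:main2} will be deduced from Theorem~\ref{thm:main1} with $p=m=\tfrac12$. If $a=b$, then $\mu=\gamma(\cdot-a)$ and the formula reduces to $h_\gamma=\sqrt{2/\pi}$. In that case the Cheeger sets are the half-spaces through $a$, since $\nu$ is undefined; we read the statement accordingly. So we assume $d=|a-b|>0$ and write $Q:=Q_{1/2,d}$, that is, $Q(x)=\tfrac12\Phi(x)+\tfrac12\Phi(x-d)$.

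First we identify $r^*$. The symmetry $\Phi(-y)=1-\Phi(y)$ gives $Q(d-x)=1-Q(x)$, so $Q(d/2)=\tfrac12$. Since $Q$ is strictly increasing, $r^*=d/2$. By Theorem~\ref{thm:main1}, we therefore need to minimize
\[
g(r):=(\log Q)'(r)=\frac{\varphi(r)+\varphi(r-d)}{\Phi(r)+\Phi(r-d)}
\]
over $[0,d/2]$. At $r=d/2$ we have $\varphi(d/2)=\varphi(-d/2)$, so the numerator equals $2\varphi(d/2)$, and the denominator equals $1$ by the symmetry above. Hence
\[
g(d/2)=2\varphi(d/2)=\sqrt{2/\pi}\,e^{-d^2/8},
\]
which is the claimed value. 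It remains to show that $g(r)>g(d/2)$ for every $r\in[0,d/2)$. Then $\mathcal O=\{d/2\}$, and part (1) of Theorem~\ref{thm:main1} (case $p\le\tfrac12$) yields exactly the half-spaces $H^\pm_{a\cdot\nu+d/2,\nu}$.

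The main obstacle is this strict inequality. We set $N(r)=\varphi(r)+\varphi(r-d)$ and $D(r)=\Phi(r)+\Phi(r-d)$. On $[0,d/2]$ the denominator satisfies $D\le 1$. It then suffices to show $N(r)>N(d/2)$ for $r<d/2$, because then $g(r)\ge N(r)>N(d/2)=g(d/2)$.

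Substitute $r=d/2-s$ with $s\in(0,d/2]$. Then
\[
N=\varphi(d/2-s)+\varphi(d/2+s)=2\varphi(d/2)\,e^{-s^2/2}\cosh(sd/2).
\]
So we need $h(s):=\log\cosh(sd/2)-s^2/2>0$ on $(0,d/2]$. This holds when $d\ge 2$, since $h''(s)=\tfrac{d^2}{4}\operatorname{sech}^2(sd/2)-1$ is positive near $0$ and $h(0)=0$. When $d$ is small, however, $N$ may increase toward $d/2$, and the bound $D\le1$ is too crude. In that regime we keep the denominator and argue directly: writing $D=1-(\text{positive gain as } r \text{ decreases})$, we show the map $s\mapsto g(d/2-s)$ is increasing. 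Concretely, we compute $g'(r)=(N'D-N^2)/D^2$ and need $N'D<N^2$ on $(0,d/2)$. We will use $N'(r)=-r\varphi(r)-(r-d)\varphi(r-d)$ together with Gaussian Mills-ratio bounds of the form $\Phi(x)\varphi$-comparisons. Equivalently, we exploit log-concavity of $\Phi$ in each summand. Our first attempt is to prove that $\log D$ is concave on $(-\infty,d/2]$ via a direct second-derivative computation; this gives $g$ decreasing on $[0,d/2]$, hence a unique minimizer at $d/2$. Should full concavity fail for some $d$, we combine the two regimes: the $\cosh$ argument handles $d\ge 2$, and the concavity estimate is applied only for $d<2$.
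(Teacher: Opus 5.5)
Your reduction is correct and matches the paper. With $p=\tfrac12$ one has $r^*=d/2$ and $(\log Q)'(d/2)=2\varphi(d/2)$, and everything hinges on showing $(\log Q)'(r)>(\log Q)'(d/2)$ for $r\in[0,d/2)$. That strict inequality, however, is never established. Your primary route, concavity of $\log D$ via a ``direct second-derivative computation'', is only announced, and the tools you name do not suffice. Log-concavity of $\Phi$ in each summand says nothing about the sum. Moreover, for $d>2$ the density $\varphi+\varphi(\cdot-d)$ is not log-concave near $d/2$: its logarithm has second derivative $d^2/4-1>0$ there. So the Pr\'ekopa-type argument that works for $d\le 2$ is unavailable exactly where it is needed.

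Your fallback does not close the gap. The claim that $h(s)=\log\cosh(sd/2)-s^2/2>0$ on $(0,d/2]$ for all $d\ge 2$ is false, and ``positive near $0$'' would not imply it anyway.
\begin{itemize}
\item At $d=2$, the inequality $\cosh s<e^{s^2/2}$ gives $h<0$ on $(0,1]$.
\item For $d>2$, $h$ is positive near $0$ and then unimodal, so positivity on $(0,d/2]$ is equivalent to $h(d/2)=\log\cosh(d^2/4)-d^2/8>0$. This fails for $2<d\lesssim 2.21$.
\end{itemize}
Concretely, for $d=2.1$ one has $N(0)=\varphi(0)+\varphi(2.1)\approx 0.443<0.460\approx 2\varphi(1.05)=N(d/2)$. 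So the bound $g\ge N$ obtained from $D\le 1$ is too lossy near $r=0$: it discards the factor $D(0)=\tfrac12+\Phi(-d)$, which is what makes $g(0)$ large. Hence the window $2<d\lesssim 2.21$ is covered by neither of your regimes.

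The missing ingredient is Proposition~\ref{prop:log_concave} with $a=1$: strict log-concavity of $\Phi+\Phi(\cdot-d)$ on $[0,d/2]$ for every $d$. The paper proves it by writing $JJ''-(J')^2=A+B+C$. It controls $A$ and $C$ via log-concavity of $\Phi$ and of the Gaussian tail. It controls $B$ via $z\varphi(z)\le\varphi(1)$ together with the Riccati comparison of Lemma~\ref{lma:ode}. Citing this result (equivalently Corollary~\ref{cor:strictly_decreasing} at $p=\tfrac12$) makes $(\log Q)'$ strictly decreasing on $[0,d/2]$, so $\mathcal O=\{d/2\}$. That is exactly the paper's argument.

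Your $\cosh$ computation, once justified through the unimodality of $h$, is a nice elementary shortcut for $d\gtrsim 2.21$. It cannot replace the proposition in the intermediate range.
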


Finally, we investigate uniqueness of the Cheeger set of $\mu$. The next theorem says that the Cheeger sets $H_{a\cdot \nu+t,\nu}^\pm$ (resp. $H_{- b\cdot \nu+t,-\nu}^\pm$) derived in Theorem \ref{thm:main1} correspond up to complements to at most two distinct half-spaces. Furthermore, for every $p$, if the distance $|a-b|$ between the two Gaussian centers is sufficiently large, then $\mu$ admits a unique Cheeger set.

\begin{theorem}\label{thm:main3}
    Let \(a,b\in\mathbb R^n\) with $a \neq b$, let \(p\in(0,1)\), and let $\mu=\mu_{p,a,b}$ be the measure in \eqref{eq:measure}.
Up to complements and sets of measure zero the following holds:
\begin{enumerate}
    \item The measure $\mu$ has at most two Cheeger sets.
    \item There exists $c(p)>0$ such that for every $a,b$ with $|a-b| \geq c(p)$, the measure $\mu$ has a unique Cheeger set. More explicitly, if $p\neq \frac12$, one may choose
    $$
    c(p)
    =
    1+\max\left\{
    \sqrt{8\log\frac{1}{2m^2}},
    \sqrt{4t_p^2+8\log\frac{1}{2m(1-m)}}
    \right\},
    $$
    where $m=\min\{p,1-p\}$ and
    $
    t_p=-\Phi^{-1}\left(\frac{\frac12-m}{1-m}\right)
    $.
\end{enumerate}
\end{theorem}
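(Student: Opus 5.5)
By Theorem \ref{thm:main1}, the Cheeger sets are, up to complements and null sets, exactly the half-spaces indexed by $t\in\mathcal O$. So both parts reduce to a one-dimensional question about the set $\mathcal O$ of minimizers of
$$
g(r):=(\log Q_{m,d})'(r)=\frac{Q_{m,d}'(r)}{Q_{m,d}(r)}, \qquad r\in[0,r^*].
$$
Part (1) says $\mathcal O$ has at most two points. Part (2) says $\mathcal O$ is a singleton once $d\ge c(p)$. The case $p=\frac12$ is already covered by Theorem \ref{thm:main2}, so we take $m<\frac12$.

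\emph{Part (1).} The plan is to count interior critical points of $g$. Write $q=Q_{m,d}'=m\varphi(x)+(1-m)\varphi(x-d)$. Then
$$
g'=\frac{q'Q-q^2}{Q^2},
$$
so the critical points of $g$ are the zeros of $F:=q'Q-q^2$. Differentiating,
$$
F'=q''Q-qq'.
$$
The key identity is $\varphi''=(x^2-1)\varphi$, together with its analogue for the shifted Gaussian. Using it, I would show that $F$ changes sign at most a bounded number of times on $\R$. One route is to divide by $\varphi(x)\varphi(x-d)$ and use that $q'/q$ and $q''/q$ are explicit rational functions of $e^{dx}$ with polynomial coefficients. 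Then a Descartes/Rolle-type argument applied to exponential polynomials bounds the number of zeros.

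The aim is to conclude that $g$ has at most one interior local minimum in $(0,r^*)$. Then the minimizers can only be that interior minimum and the endpoints $0$ and $r^*$. The configuration of the graph (a single interior local maximum between two endpoint minima) should rule out three simultaneous minimizers. Concretely, I would show that if $g$ has an interior local maximum, then $g$ is monotone between that maximum and each endpoint, so at most two of the three candidate points attain the minimum.

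\emph{Part (2).} For large $d$, compare the two candidate minimizers. Near $r=0$, the mixture is dominated by $m\varphi(x)$, with a small tail from the far Gaussian. Because $m<\frac12$, the median $r^*$ lies near the second mean. More precisely, $Q_{m,d}(r^*)=\frac12$ gives
$$
\Phi(r^*-d)\approx\frac{\frac12-m}{1-m},
$$
so $r^*\approx d-t_p$, where $t_p$ is the quantity in the statement.

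Estimate $g$ on each region using Gaussian tail bounds such as $\varphi(x)/x\ge 1-\Phi(x)$ and Mills-ratio estimates:
\begin{itemize}
\item Near $d-t_p$: $g\approx \frac{(1-m)\varphi(t_p)}{1/2}$, a constant of order one.
\item Around $d/2$: $g\lesssim \frac{\varphi(d/2)}{m}$, of order $e^{-d^2/8}$, i.e.\ exponentially small.
\end{itemize}
The conditions $d\ge 1+\sqrt{8\log\frac{1}{2m^2}}$ and $d\ge 1+\sqrt{4t_p^2+8\log\frac{1}{2m(1-m)}}$ are exactly what make these comparisons strict. They should yield that the minimum over $[0,r^*]$ is attained in a middle region, strictly below both endpoint values, and that $g$ is unimodal there (decreasing, then increasing). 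I would verify unimodality via the sign of $F$ on the middle region, where each exponential term dominates on one side of $d/2$. This forces $\mathcal O$ to be a single point.

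\emph{Main obstacle.} The hardest step is the zero count of $F$ in part (1), which must hold for all $m$ and $d$. I would first try the substitution $y=e^{dx-d^2/2}$, which turns $F\cdot e^{(\cdot)}$ into a polynomial in $y$ with coefficients polynomial in $x$. I would then bound its sign changes on $[0,r^*]$ by repeated division and differentiation, in the style of generalized Rolle for exponential sums.
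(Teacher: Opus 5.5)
There is a genuine gap. Everything in your proposal hinges on a sharp critical-point count for $g=(\log Q_{m,d})'$, and you neither prove it nor state the right version.

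\emph{The reduction in part (1) is insufficient as formulated, and one step is false.} ``At most one interior local minimum'' does not exclude three minimizers. Consider the profile $0<b<r^*$ with $g(0)=g(b)=g(r^*)$ and one local maximum in each of $(0,b)$ and $(b,r^*)$: it has a single interior local minimum. Your claim that $g$ is monotone between an interior local maximum and each endpoint is false exactly where it matters. In the two-Cheeger-set regime (e.g.\ $d=3$, $p\approx 0.075$), the two minimizers are an interior local minimum and $r^*$, separated by an interior local maximum. What you need is that $g'$ has at most two zeros on $\R$. Then three minimizers $a<b<c$ give $g'(b)=0$ plus two Rolle zeros in $(a,b)$ and $(b,c)$, a contradiction. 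A merely ``bounded'' number of sign changes of $F$ is not enough.

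\emph{Your zero-counting route does not work as stated, and the missing step is to divide by $q$ before differentiating.} $F=q'Q-q^2$ contains $Q$, i.e.\ $\Phi(x)$ and $\Phi(x-d)$. So the substitution $y=e^{dx-d^2/2}$ does not make it a polynomial in $y$ with polynomial coefficients, and Descartes/Rolle bounds for exponential sums do not apply. Dividing by $q$ first eliminates $Q$: with $h=q'/q$ and $G=F/q=hQ-q$ one gets $G'=h'Q$, where
\[
h(x)=-x+\frac{(1-m)d\,k}{m+(1-m)k},\qquad h'(x)=-1+\frac{m(1-m)d^2\,k}{(m+(1-m)k)^2},\qquad k=e^{dx-d^2/2}.
\]
Since $u\mapsto u/(m+(1-m)u)^2$ is unimodal with maximum $1/(4m(1-m))$, $h'$ has sign pattern $-,+,-$ (middle part possibly empty). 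As $G\to0$ at $-\infty$ and $G$ is initially decreasing, $G$, and hence $g'$, has at most two zeros. This is the paper's Lemma \ref{lma:number_of_zeros}.

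\emph{With that lemma, part (2) needs no separate unimodality argument.} Your plan via ``dominating exponential terms'' is not quantified at the explicit threshold $c(p)$, and it becomes unnecessary. Your endpoint comparisons, made rigorous, are
\[
g(d/2)\le\varphi(d/2)/m,\qquad g(0)\ge 2m\varphi(0),\qquad g(r^*)\ge 2(1-m)\varphi(t_p),
\]
where the last uses $-t_p\le r^*-d<0$. These exclude $0$ and $r^*$ as minimizers for $d\ge c(p)$. Two interior minimizers would then be two zeros of $g'$, with a third between them by Rolle, contradicting the lemma.
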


Given the previous statement, one may wonder if it can happen that $\mu$ has (up to complements) two distinct Cheeger sets. This situation can indeed occur (e.g. take $d=3$ and $p \approx 0.075$) and one can show that the set of parameters $(p,d) \in (0,1) \times (0,\infty)$ for which $\mu=\mu_{p,a,b}$ with $|a-b|=d$ admits two distinct Cheeger sets is meager in $(0,1) \times (0,\infty)$. Results in this direction will be the subject of a future research paper \cite{liehrFollowUp}.

\subsection{Outline and proof approach}
After reducing to the canonical mixture $\mu=p\gamma+(1-p)\gamma(\cdot-\mathbf d)$ with $\mathbf d=de_1$, the proof combines the following key ingredients: sharp isoperimetric inequalities for $\mu$ restricted to specific subclasses $\mathcal{A}$ of the Borel sets. We show that, within $\mathcal A$, sets with fixed $\mu$-volume that minimize $\mu_+$ are half-spaces orthogonal to $e_1$. This is the content of Section \ref{sec:isop}. Next, we establish a local log-concavity argument in Section \ref{sec:log_concave} yielding a monotonicity principle for the corresponding half-space Cheeger ratios with respect to $\mu$. These tools allow a global comparison across a partition of all Borel sets, which will be used to prove Theorem \ref{thm:main1} and Theorem \ref{thm:main2} in Section \ref{sec:main_results}. Finally, Section \ref{sec:33} contains the proof of Theorem \ref{thm:main3}.

\section{Isoperimetric inequalities}\label{sec:isop}

\subsection{Preliminaries}
In what follows, we work with the abbreviations $H_t^- \coloneqq H_{t,e_1}^-$ and $H_t^+ \coloneqq H_{t,e_1}^+$ where $e_1 \in \R^{n}$ denotes the first unit vector. With this abbreviation it follows that $\gamma(H_t^-) = \Phi(t)$. Moreover, we have the following identities for the $\mu$-measure and Minkowski content of half-spaces.

\begin{lemma}\label{lma:elementary}
    Let $\mu$ be the measure $\mu = p\gamma + (1-p)\gamma(\cdot-\mathbf{d})$ where $\mathbf{d}=de_1$, $d>0$, and $p \in [0,1]$. Then we have the two identities
    \begin{equation}
        \begin{split}
            \mu(H_r^-) &= p\Phi(r) + (1-p)\Phi(r-d), \\
            \mu_+(H_r^-) &= \mu_+(H_r^+) = p\varphi(r) + (1-p)\varphi(r-d).
        \end{split}
    \end{equation}
\end{lemma}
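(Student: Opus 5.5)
The plan is to reduce both identities to the corresponding facts for the standard Gaussian $\gamma$, and then use that $\mu$ and $\mu_+$ are linear in the density. Write $\mu = p\gamma + (1-p)\gamma_{\mathbf d}$, where $\gamma_{\mathbf d} = \gamma(\cdot-\mathbf d)$ is the translated Gaussian measure. For the volume identity I will use translation invariance of Lebesgue measure:
\[
\gamma_{\mathbf d}(H_r^-) = \gamma(H_r^- - \mathbf d) = \gamma(H_{r-d}^-) = \Phi(r-d).
\]
The last equality holds because $x\cdot e_1 < r$ if and only if $(x-\mathbf d)\cdot e_1 < r-d$, together with the stated fact $\gamma(H_t^-)=\Phi(t)$. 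The latter follows from Fubini, since the first coordinate of a standard Gaussian vector is a standard one-dimensional Gaussian. Adding the two contributions with weights $p$ and $1-p$ gives the first identity.

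For the Minkowski content I will compute $\varepsilon$-neighbourhoods of half-spaces exactly. The distance from $x$ to $H_r^-$ equals $(x_1-r)_+$, so $(H_r^-)^\varepsilon = H_{r+\varepsilon}^-$. Hence
\[
(H_r^-)^\varepsilon \setminus H_r^- = \{ r \le x_1 < r+\varepsilon\},
\]
and therefore
\[
\mu\bigl((H_r^-)^\varepsilon\setminus H_r^-\bigr) = \mu(H_{r+\varepsilon}^-) - \mu(H_r^-).
\]
This uses that the hyperplane $\{x_1=r\}$ is $\mu$-null, since $\mu$ is absolutely continuous. By the first identity this difference equals $Q(r+\varepsilon)-Q(r)$, where $Q(r) := p\Phi(r)+(1-p)\Phi(r-d)$ is a $C^1$ function. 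So the liminf is in fact a limit, namely
\[
Q'(r) = p\varphi(r)+(1-p)\varphi(r-d).
\]
For $H_r^+$ the argument is symmetric: $(H_r^+)^\varepsilon = H_{r-\varepsilon}^+$, and the difference set $\{r-\varepsilon< x_1\le r\}$ has measure $Q(r)-Q(r-\varepsilon)$. Dividing by $\varepsilon$, this again tends to $Q'(r)$.

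I do not expect a real obstacle here. The only point that needs care is the neighbourhood computation: one has to check that the open $\varepsilon$-neighbourhood is exactly the shifted open half-space, which follows from $\mathrm{dist}(x,H_r^-) = \max\{x_1-r,0\}$. One also has to handle the boundary hyperplane, and this is harmless by absolute continuity. Finally, the differentiability of $\Phi$ ensures that the lower Minkowski content coincides with the derivative.
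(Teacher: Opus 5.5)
Your proof is correct, but for the perimeter identity it takes a different route from the paper. The volume identity is handled the same way in both (translation plus Fubini, which the paper calls ``direct integration''). For the Minkowski content, the paper invokes the general fact that for sets with smooth boundary $\mu_+$ equals the weighted surface integral $\int_{\partial A} w\,d\mathcal H^{n-1}$, and then integrates the density over the hyperplane $\{x_1=r\}$. You instead work directly from the definition of the lower Minkowski content. Since $\mathrm{dist}(x,H_r^-)=(x_1-r)_+$, the $\varepsilon$-neighbourhood is exactly $H^-_{r+\varepsilon}$, so the quotient is just the difference quotient of the one-dimensional distribution function $Q$. The claim then reduces to the first identity and the differentiability of $Q$. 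What your approach buys: it is self-contained, it shows the $\liminf$ is an actual limit, and it avoids checking whether the general smooth-boundary representation applies to an unbounded hyperplane with a non-constant density. The paper's route is quicker to state and extends to arbitrary smooth sets, but for half-spaces yours is the cleaner argument. One small remark: for $H_r^-$ you do not need the hyperplane to be $\mu$-null. The inclusion $H_r^-\subset H^-_{r+\varepsilon}$ already gives $\mu(H^-_{r+\varepsilon}\setminus H_r^-)=Q(r+\varepsilon)-Q(r)$. Absolute continuity is only needed in the $H_r^+$ case, to identify $\mu(\{x_1\le r\})$ with $Q(r)$.
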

\begin{proof}
    The first identity follows directly from integration. To obtain the second identity we use that $H_r^-$ and $H_r^+$ have smooth boundary and therefore $$\mu_+(H_r^-) = \mu_+(H_r^+) = \int_{B} \left ( p\gamma+(1-p)\gamma(\cdot - \mathbf{d}) \right ) \, d\mathcal{H}^{n-1},$$ where $B = \{ x \in \R^n : x_1 = r \}$. The second identity then follows again from a direct integration.
\end{proof}

Recall that the isoperimetric function for the Gaussian measure $\gamma$ is given by $I_\gamma = \varphi \circ \Phi^{-1}$. In particular, for every \(E\in\mathcal B(\mathbb R^n)\) one has
\begin{equation}\label{eq:gii}
    \gamma_+(E)\ge I_\gamma(\gamma(E)),
\end{equation}
which is known as the Gaussian isoperimetric inequality \cite{latala2002}. This inequality was established by Borell \cite{borell1974} and independently by Sudakov and Tsirel'son \cite{sudakov1978}, and admits several later proofs and strengthenings, including Bobkov's functional inequality \cite{bobkov1996functional} and semigroup approaches (see \cite{ledoux,mosselNeeman2015, bobkov1996functional, carlen2001cases,barthe2000some} and the references therein). Moreover, it was shown in \cite{carlen2001cases} that equality in \eqref{eq:gii} is achieved only by half-spaces \(H_{t,\nu}^\pm\) satisfying $\gamma(E)=\gamma(H_{t,\nu})$, and stability issues are discussed in \cite{cianchi2011}. Notice that the Gaussian isoperimetric inequality implies that if $E$ satisfies $\mu(E) \leq \frac{1}{2}$ then the map sending $E$ to a half-space $H_{t,\nu}$ satisfying $\gamma(E)=\gamma(H_{t,\nu})$ is perimeter-decreasing and gives rise to the notion of Ehrhard symmetrization \cite{borell2003,Ehrhard1983}.
In the remainder of this section, we make use of the following elementary property of the Gaussian isoperimetric function \cite{bobkov1997some}.

\begin{lemma}\label{lma:gii_properties}
The Gaussian isoperimetric function $I_\gamma : [0,1] \to [0,\infty)$ is symmetric around $\frac{1}{2}$ and strictly increasing on $[0,\frac{1}{2}]$.
\end{lemma}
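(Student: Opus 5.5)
The plan is to write down the closed form $I_\gamma(v)=\varphi(\Phi^{-1}(v))$ for $v\in(0,1)$, with $I_\gamma(0)=I_\gamma(1)=0$, and to read off both claims from elementary properties of $\Phi$ and $\varphi$.

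\emph{Symmetry.} The standard normal density is even, so $\Phi(-x)=1-\Phi(x)$ for all $x$. Applying $\Phi^{-1}$ gives $\Phi^{-1}(1-v)=-\Phi^{-1}(v)$ for $v\in(0,1)$. Since $\varphi$ is even,
\[
I_\gamma(1-v)=\varphi\bigl(-\Phi^{-1}(v)\bigr)=\varphi\bigl(\Phi^{-1}(v)\bigr)=I_\gamma(v).
\]
At the endpoints $v\in\{0,1\}$ the identity is trivial.

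\emph{Strict monotonicity on $[0,\tfrac12]$.} For $v\in(0,\tfrac12]$, the number $x=\Phi^{-1}(v)$ lies in $(-\infty,0]$, and $x$ depends strictly increasingly on $v$ because $\Phi$ is a strictly increasing bijection $\R\to(0,1)$. On $(-\infty,0]$ the function $\varphi(x)=(2\pi)^{-1/2}e^{-x^2/2}$ is strictly increasing. Hence $I_\gamma$, being a composition of two strictly increasing maps, is strictly increasing on $(0,\tfrac12]$. For the endpoint, $I_\gamma(v)>0=I_\gamma(0)$ for every $v>0$.

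One can also check this by differentiation. Writing $x=\Phi^{-1}(v)$ and using $\varphi'(x)=-x\varphi(x)$ and $(\Phi^{-1})'(v)=1/\varphi(x)$, we get $I_\gamma'(v)=-\Phi^{-1}(v)$. This is positive on $(0,\tfrac12)$, and together with continuity at $0$ it gives the same conclusion.

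There is no real obstacle. The only point needing care is the endpoints, which are handled by the convention $I_\gamma(0)=I_\gamma(1)=0$ together with continuity: $\varphi(\Phi^{-1}(v))\to0$ as $v\to0^+$.
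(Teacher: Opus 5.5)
Your proof is correct and is essentially what the paper intends: the paper gives no separate argument, citing \cite{bobkov1997some} and relying on the closed form $I_\gamma=\varphi\circ\Phi^{-1}$ stated earlier. Both properties follow from it exactly as you show, via $\Phi(-x)=1-\Phi(x)$ with $\varphi$ even, and via $\varphi$ being strictly increasing on $(-\infty,0]$ composed with $\Phi^{-1}$. Your endpoint convention $I_\gamma(0)=I_\gamma(1)=0$ agrees with the definition, since $\emptyset$ and $\mathbb{R}^n$ have zero Minkowski content.
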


An important ingredient in the preceding proofs is the following theorem, which is known as the shift-inequality for Gaussian measure \cite{liKuelbs} (see also \cite{kuelbs1994gaussian,Oleszkiewicz,cordero2004b} for further studies on the Gaussian measure of shifts and dilates).

\begin{theorem}\label{thm:shift_inequality}
Let $E \in \mathcal{B}(\R^n)$ and let $t \in \R$ such that $\gamma(E) = \Phi(t)$. Then for every $\lambda \in \R$ and every $\nu \in \mathbb{S}^{n-1}$ one has
$$
\Phi(t - |\lambda|) \leq \gamma(E+\lambda \nu) \leq \Phi(t + |\lambda|),
$$
with equality if and only if $E$ coincides with a half-space perpendicular to $\nu$ (one for the lower bound and one for the upper bound).
\end{theorem}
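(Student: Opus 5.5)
The plan is to reduce to a one-dimensional statement via Ehrhard symmetrization, in the form of the Gaussian isoperimetric inequality, together with the behaviour of the Gaussian measure along a single direction. By rotation invariance of $\gamma$ we may take $\nu=e_1$. Write $E_\lambda:=E+\lambda e_1$ and $f(\lambda):=\gamma(E_\lambda)=\gamma(E-\lambda e_1)$ after translating the Gaussian. For the half-space $H_t^-$ we have $\gamma(H_t^-+\lambda e_1)=\Phi(t+\lambda)$. The target inequality is therefore $\Phi^{-1}(f(\lambda))\le \Phi^{-1}(f(0))+|\lambda|$. By symmetry in $\lambda$, and by replacing $E$ with its complement, the lower bound follows from the upper bound: $\gamma((E^c)+\lambda\nu)=1-\gamma(E+\lambda\nu)$ and $\gamma(E^c)=\Phi(-t)$. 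So I would prove only the upper bound, for $\lambda\ge0$.

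\textbf{Key step: a Lipschitz bound for $g(\lambda):=\Phi^{-1}(f(\lambda))$.} I would show that $g$ is $1$-Lipschitz. Formally,
\[
f'(\lambda)=\frac{d}{d\lambda}\int_{E}\gamma(x-\lambda e_1)\,dx=\int_E (x_1-\lambda)\gamma(x-\lambda e_1)\,dx ,
\]
which after the change of variables is $\int_{E-\lambda e_1}y_1\gamma(y)\,dy$. Among sets $F$ of fixed Gaussian measure $\Phi(s)$, the integral $\int_F y_1\,d\gamma$ is maximized by $\{y_1>-s\}$; this is a bathtub / Hardy--Littlewood rearrangement argument, since $y_1$ is the weight. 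That maximum equals $\varphi(s)$. Hence $|f'(\lambda)|\le \varphi(\Phi^{-1}(f(\lambda)))=I_\gamma(f(\lambda))$, and so $|g'|\le1$. Integrating from $0$ to $\lambda$ gives $g(\lambda)\le t+\lambda$, which is the upper bound $\gamma(E+\lambda\nu)\le\Phi(t+|\lambda|)$.

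Differentiability needs a short justification. Dominated convergence gives $f\in C^1$ for any Borel $E$, because $|y_1|\gamma(y)$ is integrable. We may assume $0<f<1$; otherwise $E$ is null or conull and both sides are trivial. Then $g$ is $C^1$ and the chain rule applies. Note that this argument does not even need the full isoperimetric inequality \eqref{eq:gii}, only the one-dimensional bathtub principle. Alternatively, the upper bound follows from \eqref{eq:gii} together with the fact that $E_\lambda\subset E^{\lambda}$ up to boundary effects, but the derivative route avoids regularity issues.

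\textbf{Equality cases, the main obstacle.} If equality holds at some $\lambda_0>0$, then $g'\equiv1$ on $[0,\lambda_0]$, since $g$ is $1$-Lipschitz. In particular, at $\lambda=0$ we get $\int_E y_1\,d\gamma=\varphi(t)$. The bathtub principle is strict: the sup is attained only when $E$ equals $\{y_1>-t\}$ up to null sets. The reason is that the level sets of $y_1$ have zero Gaussian measure, so any other set of the same measure trades points with larger $y_1$ for points with smaller $y_1$ on a set of positive measure. This forces $E$ to be a half-space perpendicular to $\nu$: $\{x_1>-t\}$ for the upper bound, and its mirror image $\{x_1<t\}$ for the lower bound. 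Conversely, such half-spaces attain equality by direct computation. The delicate point is keeping the sign conventions of the shift straight, and checking that strictness in the rearrangement step transfers correctly. I expect this to be routine once written carefully.
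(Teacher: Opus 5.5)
Your argument is correct. Where the paper gives no proof at all (the statement is quoted from Li--Kuelbs and used as a black box), yours is self-contained.

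\textbf{Your route.} It is infinitesimal. You show that $g(\lambda)=\Phi^{-1}(\gamma(E-\lambda e_1))$ is $C^1$ with $|g'|\le 1$. This holds because $g'$ is the first moment $\int_{E-\lambda e_1}y_1\,d\gamma$ divided by $\varphi(g)$, and the strict bathtub principle bounds that moment in absolute value by $\varphi(s)$ for sets of measure $\Phi(s)$. Integrating gives both bounds at once, so the complement reduction is not even needed. For the equality case, equality at some $\lambda_0\neq 0$ forces $g'\equiv\pm1$ between $0$ and $\lambda_0$. Hence the bathtub inequality is an equality at $\lambda=0$, and $E$ is a half-space.

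\textbf{A shorter route.} You can apply the same bathtub principle once, directly to the finite shift. By the Cameron--Martin identity
\[
\gamma(E+\lambda\nu)=\int_E e^{-\lambda\, x\cdot\nu-\lambda^2/2}\,d\gamma(x).
\]
For $\lambda>0$ the weight is strictly decreasing in $x\cdot\nu$. So among sets with $\gamma(E)=\Phi(t)$, the integral is maximized exactly by $\{x\cdot\nu<t\}$ and minimized exactly by $\{x\cdot\nu>-t\}$, with values $\Phi(t\pm\lambda)$.

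\textbf{Comparison.} The direct route avoids differentiating under the integral sign and the integration-plus-continuity step in the equality case. Your version exhibits the $1$-Lipschitz property of $\lambda\mapsto\Phi^{-1}(\gamma(E+\lambda\nu))$ explicitly. That property is, however, equivalent to the inequality itself, by applying it to translates of $E$. Both proofs rest on the same fact: the level sets of the relevant weight are the hyperplanes $x\cdot\nu=\mathrm{const}$, which are $\gamma$-null.

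\textbf{Corrections.}
\begin{enumerate}
\item $\gamma(E+\lambda e_1)\neq\gamma(E-\lambda e_1)$ in general, so your definition of $f$ silently replaces $\lambda$ by $-\lambda$. This is harmless for the bounds, but it swaps which half-space is extremal. In the statement's convention with $\lambda>0$, the upper bound is attained by $\{x\cdot\nu<t\}$ and the lower bound by $\{x\cdot\nu>-t\}$.
\item Your aside that the upper bound would alternatively follow from \eqref{eq:gii} and $E+\lambda\nu\subset E^{|\lambda|+\varepsilon}$ does not work. The inclusion bounds $\gamma(E+\lambda\nu)$ from above by the measure of an enlargement, while the isoperimetric inequality only bounds enlargements from below. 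The shift inequality is an upper bound on a first moment, not a lower bound on a perimeter, which is why the bathtub principle is the right tool. Relatedly, your opening sentence announces Ehrhard symmetrization, but the argument never uses it.
\item Differentiating under the integral needs a domination that is uniform for $\lambda$ in a neighbourhood, not just integrability of $|y_1|\gamma(y)$. This is immediate here.
\end{enumerate}
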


A consequence of the latter Theorem is the following observation.

\begin{corollary}\label{cor:li_kuelbs_consequence}
    Let $E \in \mathcal{B}(\R^n)$ such that $\gamma(E)=\Phi(t)=\gamma(H_t^-)$. Further, let $\mathbf{d}=de_1$ with $d>0$. Then it holds that
    $$
    \gamma(E-\mathbf{d}) \geq \Phi(t-d)
    $$
    with equality if and only if $E=H_t^-$.
\end{corollary}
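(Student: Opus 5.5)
The corollary is a direct instance of Theorem \ref{thm:shift_inequality}. We take $\nu = e_1$ and $\lambda = -d$, so that $E + \lambda\nu = E - \mathbf{d}$ and $|\lambda| = d$. Since $\gamma(E) = \Phi(t)$ by assumption, the lower bound in Theorem \ref{thm:shift_inequality} gives
$$
\gamma(E-\mathbf{d}) \geq \Phi(t-d),
$$
which is the asserted inequality.

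For the equality case, we first check that $H_t^-$ attains the bound. Indeed, $H_t^- - \mathbf{d} = \{x : x_1 + d < t\} = H_{t-d}^-$, so $\gamma(H_t^- - \mathbf{d}) = \Phi(t-d)$.

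Conversely, suppose equality holds. By the equality clause of Theorem \ref{thm:shift_inequality}, $E$ coincides (up to null sets) with a half-space perpendicular to $e_1$. Such a half-space is either $H_s^-$ or $H_s^+$ for some $s$. The constraint $\gamma(E) = \Phi(t)$ forces either $E = H_t^-$ or $E = H_{-t}^+$, since $\gamma(H_{-t}^+) = 1 - \Phi(-t) = \Phi(t)$.

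It remains to rule out $E = H_{-t}^+$. Here $H_{-t}^+ - \mathbf{d} = \{x_1 > -t-d\}$, which has Gaussian measure $\Phi(t+d)$. Since $d > 0$ and $\Phi$ is strictly increasing, $\Phi(t+d) > \Phi(t-d)$, so equality fails. Hence $E = H_t^-$.

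The only subtle point is this last step: the theorem's equality statement says equality in the lower bound holds for exactly one of the two half-spaces perpendicular to $\nu$ with the given measure. The direct computation above identifies it as $H_t^-$. Everything else is bookkeeping with the shift direction and sign conventions, and equality throughout is understood up to $\gamma$-null sets, which agree with $\mu$-null sets because both measures have positive densities.
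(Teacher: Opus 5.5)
Your proof is correct and follows the paper's argument essentially verbatim. You apply Theorem \ref{thm:shift_inequality} with $\lambda=-d$ and $\nu=e_1$, use the equality clause together with $\gamma(E)=\Phi(t)$ to reduce to $H_t^-$ or $H_{-t}^+$, and exclude the latter via $\gamma(H_{-t-d}^+)=\Phi(t+d)>\Phi(t-d)$; your explicit check that $H_t^-$ attains the bound (the ``if'' direction) is a small addition the paper leaves implicit.
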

\begin{proof}
    Let $\lambda = -d$ and let $\nu = e_1$. Then $\gamma(E-\mathbf{d}) = \gamma(E+\lambda \nu)$. Since $\gamma(E)=\Phi(t)$, an application of Theorem \ref{thm:shift_inequality}, yields the bound
    $$
    \gamma(E-\mathbf{d}) \geq \Phi(t-d).
    $$
    Assume that equality holds in the latter inequality. Then equality holds in the lower bound of Theorem \ref{thm:shift_inequality}, so $E$ is up to null sets a half-space perpendicular to the vector $\nu$. Hence, $E=H_a^-$ or $E=H_b^+$ for some $a,b \in \R$.
    
    If $E=H_a^-$ then $\gamma(E) = \Phi(a) =\Phi(t)$ so $a=t$ and $E=H_t^-$.

    If $E=H_b^+$ then $\gamma(E) = 1-\Phi(b) = \Phi(t)$, hence $b=-t$. But then
    $$
    \gamma(E-\mathbf{d}) = \gamma(H_{-t-d}^+) = \Phi(t+d) > \Phi(t-d),
    $$
    where we used that $d>0$. This contradicts equality. It therefore follows that $E=H_a^-$ up to null sets.
\end{proof}

\subsection{Isoperimetric inequalities for $\mu$}

Given $\mathbf{d} = de_1 \in \R^n$ with $d > 0$, we define two classes of sets $\mathcal{A} \subset \mathcal{B}(\R^n)$ and $\widetilde{\mathcal{A}} \subset \mathcal{B}(\R^n)$ via
\begin{equation}\label{eq:mathcal_A}
    \begin{split}
        \mathcal A &\coloneqq \Bigl\{E\in\mathcal B(\mathbb R^n):\ \gamma(E)\ge \tfrac12,\ \gamma(E-\mathbf d)\le \tfrac12\Bigr\}, \\
        \widetilde{\mathcal A} & \coloneqq \Bigl\{E\in\mathcal B(\mathbb R^n):\ \gamma(E) < \tfrac12,\ \gamma(E-\mathbf d) > \tfrac12\Bigr\}.
    \end{split}
\end{equation}
The next statement can be regarded as a solution to the isoperimetric problem of the measure $\mu$ when restricted to sets in $\mathcal{A}$ resp. $\widetilde{\mathcal{A}}$.

\begin{theorem}\label{thm:isop_weighted}
Let $\mu$ be the measure $\mu = p\gamma + (1-p)\gamma(\cdot-\mathbf{d})$ where $\mathbf{d}=de_1$, $d>0$, and $p \in (0,1)$.
\begin{enumerate}
    \item Let \(E\in\mathcal A\) and let \(r\in\mathbb R\) be such that \(\mu(E)=\mu(H_r^-)\).
Then $r \in [0,d]$ and
\[
\mu_+(E)\ge \mu_+(H_r^-),
\]
with equality if and only if \(E = H_r^-\).
\item Let \(E\in\widetilde{\mathcal A}\) and let \(s\in\mathbb R\) be such that \(\mu(E)=\mu(H_s^+)\).
Then $s \in [0,d]$ and
\[
\mu_+(E)\ge \mu_+(H_s^+),
\]
with equality if and only if \(E = H_s^+\).
\end{enumerate}
\end{theorem}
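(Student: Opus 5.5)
We treat part (1) in detail; part (2) is symmetric. Fix $E\in\mathcal A$ and write $\gamma(E)=\Phi(t)$ with $t\ge 0$, since $\gamma(E)\ge \frac12$. The key observation is that $\mu_+(E)$ splits, by superadditivity of $\liminf$ applied to the two pieces of the density, as
\[
\mu_+(E)\ \ge\ p\,\gamma_+(E)+(1-p)\,\gamma_+(E-\mathbf d),
\]
where we use that $(E-\mathbf d)^\varepsilon=E^\varepsilon-\mathbf d$. We then bound each term from below by the Gaussian isoperimetric inequality \eqref{eq:gii}:
\[
\gamma_+(E)\ge I_\gamma(\Phi(t))=\varphi(t),\qquad \gamma_+(E-\mathbf d)\ge I_\gamma(\gamma(E-\mathbf d)).
\]
By Corollary \ref{cor:li_kuelbs_consequence} we have $\gamma(E-\mathbf d)\ge \Phi(t-d)$, and since $E\in\mathcal A$ also $\gamma(E-\mathbf d)\le\frac12$. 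So $\gamma(E-\mathbf d)$ lies in $[\Phi(t-d),\frac12]$, where $I_\gamma$ is increasing by Lemma \ref{lma:gii_properties}. This gives $\gamma_+(E-\mathbf d)\ge \varphi(t-d)$, and therefore
\[
\mu_+(E)\ge p\varphi(t)+(1-p)\varphi(t-d)=\mu_+(H_t^-)
\]
by Lemma \ref{lma:elementary}.

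The remaining issue is that the comparison half-space must have the same $\mu$-volume as $E$. We have $\mu(E)=p\Phi(t)+(1-p)\gamma(E-\mathbf d)\ge \mu(H_t^-)$, so $r\ge t\ge 0$. Also $\gamma(E-\mathbf d)\le \frac12$ gives $E-\mathbf d$ volume at most $\frac12$, and from $\gamma(E)\ge\frac12$, $\gamma(E-\mathbf d)\le \frac12$ we need $r\le d$ as well. To get $r\le d$, I would apply the shift inequality in the other direction: with $\gamma(E-\mathbf d)=\Phi(s)$ and $s\le 0$, Theorem \ref{thm:shift_inequality} gives $\gamma(E)\le\Phi(s+d)$. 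Hence $\mu(E)\le \mu(H_{s+d}^-)$, so $t\le r\le s+d\le d$.

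So $r\in[t,s+d]$ and $t,s+d\in[0,d]$, but $\mu_+(H_r^-)$ is not monotone in $r$ in general. This is the main obstacle. Rather than compare with $H_t^-$, I would compare directly with $H_r^-$. Set $u=\gamma(E)=\Phi(t)\ge\frac12$ and $v=\gamma(E-\mathbf d)=\Phi(s)\le\frac12$. We have $\mu_+(E)\ge p I_\gamma(u)+(1-p)I_\gamma(v)$ under the constraints
\[
pu+(1-p)v=p\Phi(r)+(1-p)\Phi(r-d),\qquad \Phi(s)\ge \Phi(t-d).
\]
The half-space corresponds to $u=\Phi(r)$, $v=\Phi(r-d)$. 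In the variables $t,s$, the function $F(t,s)=p\varphi(t)+(1-p)\varphi(s)$ is to be minimized on the curve $p\Phi(t)+(1-p)\Phi(s)=\text{const}$, inside the region $s\ge t-d$, $t\ge0$, $s\le0$.

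Along the curve we have $ds/dt=-p\varphi(t)/((1-p)\varphi(s))$, and hence
\[
\frac{dF}{dt}=-p\varphi(t)\bigl(t-s\bigr).
\]
This uses $\varphi'(x)=-x\varphi(x)$, which gives $dF/dt=p\varphi'(t)+(1-p)\varphi'(s)\,ds/dt=-p t\varphi(t)+p s\varphi(t)$. Since $t\ge0\ge s$, we get $t-s\ge0$, so $F$ is nonincreasing as $t$ increases along the curve. As $t$ increases, $s$ decreases, so the constraint $s\ge t-d$ caps $t$ at the point $s=t-d$. That point is exactly the half-space $t=s+d=r$. Hence $F(t,s)\ge F(r,r-d)=\mu_+(H_r^-)$, with strict decrease unless $t=s=0$, which is impossible under $d>0$ at the endpoint.

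For the equality case: equality forces the point $(t,s)$ to be that endpoint, i.e.\ $\gamma(E-\mathbf d)=\Phi(t-d)$. Corollary \ref{cor:li_kuelbs_consequence} then gives $E=H_t^-=H_r^-$ up to null sets.

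Part (2) follows by the same argument after the reflection $x\mapsto \mathbf d-x$, which exchanges the roles of the two components and of $H^-$ and $H^+$. Alternatively, it can be run directly with the upper bound of the shift inequality.
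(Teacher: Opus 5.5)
Your proof is correct, but the last step takes a different and longer route than the paper.

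The setup is the same as the paper's. You split $\mu_+(E)\ge p\gamma_+(E)+(1-p)\gamma_+(E-\mathbf d)$, apply the Gaussian isoperimetric inequality to each piece, and use the shift inequality and the volume identity to locate $t$, $s$, $r$. At that point you already have $0\le t\le r$, and from $r\le s+d$ also $r-d\le s\le 0$. (The paper gets the latter from $\gamma(E-\mathbf d)=\Phi(r-d)+\frac{p}{1-p}(\Phi(r)-\Phi(t))$.)

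Since $\varphi$ is decreasing in $|x|$, this gives termwise $\varphi(t)\ge\varphi(r)$ and $\varphi(s)\ge\varphi(r-d)$, hence $\mu_+(E)\ge\mu_+(H_r^-)$ immediately. Equality forces $\varphi(t)=\varphi(r)$, i.e.\ $t=r$ and $\gamma(E-\mathbf d)=\Phi(t-d)$, so the Corollary applies. This is what the paper does.

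So the ``main obstacle'' you identify, the non-monotonicity of $r\mapsto\mu_+(H_r^-)$, never arises. You compare with $H_r^-$ directly, and each Gaussian term improves separately.

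Your level-curve computation $dF/dt=-p\varphi(t)(t-s)$ is a valid substitute. It proves the slightly stronger fact that $F$ decreases along the entire isovolume arc from $(t,s)$ to $(r,r-d)$. The cost is parametrizing that arc and a separate strictness argument. That argument should be stated as: $t-s(t)$ is strictly increasing along the arc, so the derivative vanishes at most at one point, and $F(t,s)>F(r,r-d)$ whenever $t<r$.

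The preliminary comparison with $H_t^-$ is never used and can be dropped. For part (2), your point reflection $x\mapsto\mathbf d-x$ works as well as the paper's reflection in $\{x_1=\frac d2\}$.
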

\begin{proof}
    \textit{Claim (i).} If \(\mu_+(E)=+\infty\), the claim is trivial. Hence, we assume that \(\mu_+(E)<\infty\).
By Lemma \ref{lma:elementary} it holds that
\begin{equation}\label{eq:wE_equals}
\mu(E)=\mu(H_r^-)=p\Phi(r)+(1-p)\Phi(r-d).
\end{equation}
Since \(\gamma(E)\ge \tfrac12\) and \(\Phi\) is strictly increasing, there exists a unique \(t \ge 0\) such that
\[
\gamma(E)=\Phi(t).
\]
Applying Theorem \ref{thm:shift_inequality} with \(\lambda=-d\), and \(\nu=e_1\) gives
\[
\gamma(E-\mathbf d)=\gamma(E-de_1)\ge \Phi(t-d).
\]
By assumption, we have that $\gamma(E-\mathbf{d}) \leq \frac 12$ and since $\Phi(0)=\frac12$ and $\Phi$ is increasing we obtain $t-d \leq 0$. Hence,
$
\gamma(E) = \Phi(t) \leq \Phi(d)
$
which implies that
\begin{equation}
    \begin{split}
        \mu(H_r^-) = \mu(E) &= p \gamma(E) + (1-p) \gamma(E-\mathbf{d}) \\ 
        & \leq p \Phi(d) + (1-p)\Phi(0) = \mu(H_d^-).
    \end{split}
\end{equation}
The strict monotonicity of $r \mapsto \mu(H_r^-)$ implies $r \leq d$.

Using \(\mu(E)=p\gamma(E)+(1-p)\gamma(E-\mathbf d)\), we further obtain the inequality
\begin{equation}\label{eq:wE_lower}
\mu(E)\ge p\Phi(t)+(1-p)\Phi(t-d).
\end{equation}
Now define \(F(u):=p\Phi(u)+(1-p)\Phi(u-d)\). Then \(F\) is strictly increasing on \(\mathbb R\). Combining \eqref{eq:wE_equals}, \eqref{eq:wE_lower} and the assumption that $\mu(E) = \mu(H_r^-)$ yields \(F(r)=\mu(E)\ge F(t)\), which implies
\begin{equation}\label{eq:r_ge_t}
r \ge t\ge 0.
\end{equation}
Moreover, from \(\mu(E)=p\Phi(t)+(1-p)\gamma(E-\mathbf d)\) and \eqref{eq:wE_equals} we obtain
\begin{equation}\label{eq:gammaEd_lower}
\gamma(E-\mathbf d)
=\Phi(r-d)+\frac{p}{1-p}\bigl(\Phi(r)-\Phi(t)\bigr)
\ge \Phi(r-d),
\end{equation}
where the last inequality follows from \(r\ge t\).

Since \(E\in\mathcal A\), we have \(\gamma(E-\mathbf d)\le \tfrac12\) and therefore
\begin{equation}\label{eq:bnd}
   \Phi(r-d) \leq \gamma(E-\mathbf{d}) \leq \frac{1}{2}.
\end{equation}
According to Lemma \ref{lma:gii_properties} we have that $I_\gamma$ is strictly increasing on $[0,\frac{1}{2}]$ which gives
\[
I_\gamma(\gamma(E-\mathbf d))\ge I_\gamma(\Phi(r-d))=\varphi(r-d).
\]
Combining the latter bound with the Gaussian isoperimetric inequality yields the inequalities
\begin{align*}
\mu_+(E)
&\geq p \gamma_+(E)+(1-p)\gamma_+(E-\mathbf d)\\
&\ge pI_\gamma(\gamma(E))+(1-p)I_\gamma(\gamma(E-\mathbf d))\\
&=pI_\gamma(\Phi(t))+(1-p)I_\gamma(\gamma(E-\mathbf d))\\
&\ge p\varphi(t)+(1-p)\varphi(r-d).
\end{align*}
Finally, \eqref{eq:r_ge_t} and the fact that \(\varphi\) is strictly decreasing on \([0,\infty)\) imply
$
\varphi(t)\ge \varphi(r),
$
so
\[
\mu_+(E)\ge p\varphi(r)+(1-p)\varphi(r-d)=\mu_+(H_r^-),
\]
where the last identity is Lemma~\ref{lma:elementary}. This proves the inequality.

\smallskip
Now suppose that $E \in \mathcal{A}$ satisfies $\mu_+(E) = \mu_+(H_r^-)$. If $t$ is defined as above then the inequalities derived beforehand show that
$$
\mu_+(E) \geq p \varphi(t) + (1-p) \varphi(r-d) \geq p \varphi(r) + (1-p) \varphi(r-d) = \mu_+(H_r^-).
$$
Hence, we obtain that $t=r$. Moreover, $r$ satisfies $\mu(E) = \mu(H_r^-)$. Consequently,
\begin{equation}
    \begin{split}
        \mu(E) &= p\Phi(t) + (1-p)\gamma(E-\mathbf{d}) = p\Phi(r) + (1-p)\gamma(E-\mathbf{d}), \\
        \mu(E) &= \mu(H_r^-)  = p\Phi(r) + (1-p)\Phi(r-d).
    \end{split}
\end{equation}
This implies that $\gamma(E-\mathbf{d}) = \Phi(r-d)$. Hence, $E=H_r^-$ by Corollary \ref{cor:li_kuelbs_consequence}.

\smallskip
\textit{Claim (ii):} The second claim is analogous to the first claim by considering the measure $\tilde \mu = (1-p)\gamma + p\gamma(\cdot - \mathbf{d})$ which arises from the measure $\mu$ by applying a reflection along the hyperplane $\{ x_1 = \frac{d}{2}\}$.
\end{proof}

Define the isoperimetric function of $\mu$ restricted to the sets $\mathcal{S} \subseteq \mathcal{B}(\R^n)$ via
$$
I_{\mu,\mathcal{S}}(v) = \inf \{ \mu_+(A) : A \in \mathcal{S}, \, \mu(A) = v \}.
$$
For instance, we have seen that $I_{\gamma,\mathcal{B}(\R^n)} = \Phi' \circ \Phi^{-1}$. Theorem \ref{thm:isop_weighted} shows that if $\mu$ is the measure $\mu = p\gamma + (1-p)\gamma(\cdot-\mathbf{d})$ where $\mathbf{d}=de_1$, $d>0$, $p \in (0,1)$, and if $Q = p \Phi + (1-p)\Phi(\cdot -d)$, then $I_{\mu,\mathcal{A}}$ satisfies
$$
I_{\mu,\mathcal{A}}(v) = (Q' \circ Q^{-1})(v), \quad v \in [Q(0),Q(d)]. 
$$
A similar equality holds for the class $\widetilde{\mathcal{A}}$.

\section{Local log-concavity}\label{sec:log_concave}

Recall that the Gaussian measure is log-concave, implying that $\Phi$ is log-concave on $\R$. This global log-concavity property yields the Cheeger constant of the Gaussian $\gamma$ via the formula $h_\gamma = \sqrt{2/\pi}$. Clearly, the mixture measure $\mu$ given in \eqref{eq:measure} is not globally log-concave. However, the next Proposition identifies a compact interval where the distribution function of the measure $\mu$ in one dimension is strictly log-concave. It turns out that this local log-concavity property, when combined with the results in Section \ref{sec:isop}, is sufficient for deriving the Cheeger constant of $\mu$ for all parameters $p \in [0,1]$, $a,b \in \R^n$ and in arbitrary dimensions $n$.

\begin{proposition}\label{prop:log_concave}
    For every $a \geq 1$ and every $d \geq 0$ it holds that the function
    $$
J(x)  \coloneqq a \int_{-\infty}^x e^{-\frac{1}{2}t^2} \, dt + \int_{-\infty}^{x}  e^{-\frac{1}{2}(t-d)^2} \, dt
$$
is strictly log-concave on the interval $[0,\frac{d}{2}]$.
\end{proposition}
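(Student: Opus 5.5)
The plan is to verify the log-concavity inequality $J\,J''-(J')^2<0$ pointwise on $[0,\tfrac d2]$ by splitting it into three groups of terms: a pure first-Gaussian part, a pure second-Gaussian part, and a cross part. The two pure parts are negative by the classical Gaussian Mills-ratio inequalities. The cross part is the difficulty, and I do not yet have a complete argument for it; it is discussed in the last paragraph.

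\emph{Setup.} Put $g_0(x)=e^{-x^2/2}$, $g_1(x)=e^{-(x-d)^2/2}$ and $G_i(x)=\int_{-\infty}^x g_i$. Then
\[
J=aG_0+G_1,\qquad J'=ag_0+g_1,\qquad J''=-a\,x\,g_0+(d-x)\,g_1 .
\]
Expanding gives
\[
JJ''-(J')^2=a^2A+aC+B,
\]
with
\[
A=-g_0\,(xG_0+g_0),\qquad B=g_1\bigl((d-x)G_1-g_1\bigr),
\]
\[
C=-x\,g_0G_1+(d-x)\,g_1G_0-2g_0g_1 .
\]

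\emph{The two pure terms.} For $x\ge 0$ one has $A<0$ trivially. For $x\le \tfrac d2$ we have $x-d<0$, and the Mills-ratio inequality $\Phi(y)<\varphi(y)/|y|$ for $y<0$ gives $(d-x)G_1(x)<g_1(x)$. Hence $B<0$. Both facts just express strict log-concavity of $\Phi$, consistent with the remark before the proposition.

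\emph{Reduction in $a$.} The term $C$ can be positive, because the summand $(d-x)g_1G_0$ is of order $d\,e^{-d^2/2}$ near $x=0$. The hypothesis $a\ge1$ is used here: since $A<0$ and $a\ge1$, we have $aA+C\le A+C$, so
\[
a^2A+aC+B\le a(A+C)+B<a(A+C).
\]
It therefore suffices to prove $A+C\le 0$ on $[0,\tfrac d2]$, a statement free of $a$. Dividing by $g_0$ and writing $\rho=g_1/g_0\le 1$ (since $|x-d|\ge|x|$ on this interval), the target becomes
\[
G_0\bigl((d-x)\rho-x\bigr)\le g_0+xG_1+2g_1 .
\]

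\emph{Main obstacle: this one-dimensional estimate.} Substitute $y=d-2x\ge0$, so that $\rho=e^{-(2x+y)y/2}$. Then
\[
(d-x)\rho-x=(x+y)e^{-(2x+y)y/2}-x\le x\bigl(e^{-(2x+y)y/2}-1\bigr)+y\,e^{-y^2/2}.
\]
I would split into two regimes.
\begin{itemize}
\item When $xy$ is bounded below, the term $x(\rho-1)\approx -x$ is strongly negative. Together with $y\,e^{-y^2/2}\le e^{-1/2}$, this makes the left side nonpositive.
\item When $xy$ is small, either $x$ is small or $y$ is small. For small $x$, use $G_0\le\sqrt{2\pi}$ and $g_0\approx1$. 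For small $y$, the left side is at most about $y$ while the right side contains $xG_1+2g_1$, which is of order at least $g_1\approx g_0$.
\end{itemize}
The delicate part is making these two regimes overlap with explicit constants, since the right side $g_0$ decays while $G_0$ stays of order $\sqrt{2\pi}$. I expect to need the retained negative term $-xG_1$, that is, to keep the full $xG_1$ on the right rather than drop it, and possibly to keep the strict negativity of $B$ instead of discarding it. If the crude reduction to $A+C\le0$ turns out to be too lossy, the fallback is to treat $a^2A+aC+B$ as a quadratic in $a$ and check its value and slope at $a=1$.
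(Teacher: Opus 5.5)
Your decomposition $JJ''-(J')^2=a^2A+aC+B$ is correct, and so are the signs $A<0$, $B<0$ and the reduction, via $a\ge1$, to the $a$-free inequality $A+C\le0$ on $[0,\tfrac d2]$. This is exactly the paper's strategy: your $A+C\le0$ is the paper's condition $B<-A$ in its labeling, and your strict $B<0$ supplies the strictness. However, the proposal stops at the only nontrivial step. The inequality $G_0\bigl((d-x)\rho-x\bigr)\le g_0+xG_1+2g_1$ is never proved. The two-regime sketch in $x$ and $y=d-2x$ has no explicit, overlapping constants. So the argument as written does not establish the proposition.

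The missing idea is not to expand $\rho$ in $x,y$ at all. Instead, note that $(d-x)\rho=\psi(d-x)/g_0(x)$ with $\psi(u)=u\,e^{-u^2/2}$. The function $\psi$ increases on $[0,1]$, decreases on $[1,\infty)$, and has maximum $e^{-1/2}$. This lets you discard both $xG_1$ and $2g_1$; contrary to your expectation, they are not needed. You can then prove the stronger bound $\frac{(d-x)g_1(x)}{g_0(x)}\le x+\frac{g_0(x)}{G_0(x)}$, which is equivalent to $A+(d-x)g_1G_0\le0$ and hence gives $A+C<0$.
\begin{itemize}
\item If $x\ge1$, then $d-x\ge x\ge1$ gives $\psi(d-x)\le\psi(x)$, so the left side is at most $x$.
\item If $x\in[0,1]$, the left side is at most $e^{(x^2-1)/2}$. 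You then need the one-variable inequality $e^{(x^2-1)/2}<x+g_0(x)/G_0(x)$ on $[0,1]$.
\end{itemize}

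The paper proves this last inequality by a Riccati comparison. The function $r=g_0/G_0$ solves $r'=-xr-r^2$. The function $\ell(x)=e^{(x^2-1)/2}-x$ satisfies $\ell'+x\ell+\ell^2=e^{x^2-1}-1\le0$ on $[0,1]$. Hence $r-\ell$ satisfies $(r-\ell)'+(x+r+\ell)(r-\ell)\ge0$. An integrating factor, together with $r(0)-\ell(0)=\sqrt{2/\pi}-e^{-1/2}>0$, gives $r>\ell$ on $[0,1]$. Without this estimate, or an equivalent explicit one on $[0,1]$, your proof has a genuine gap at its central step.
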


For the proof of Proposition \ref{prop:log_concave} we require the following Lemma.

\begin{lemma}\label{lma:ode}
    Let $f(t) = e^{-\frac{t^2}{2}}$ and $F(x) = \int_{-\infty}^x f(t) \, dt$. Then for every $x \in [0,1]$ we have the strict inequality
    $$
    \exp\left (\frac{x^2-1}{2} \right ) < x + \frac{f(x)}{F(x)}.
    $$
\end{lemma}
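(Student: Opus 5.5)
The plan is a direct monotonicity-plus-grid argument, with no ODE comparison needed. Write $g(x) := f(x)/F(x)$, which is the ratio $\varphi/\Phi$ since the factor $\sqrt{2\pi}$ cancels.

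First I would record two monotonicity facts.
\begin{enumerate}
\item The left-hand side $L(x)=\exp\bigl(\frac{x^2-1}{2}\bigr)$ is increasing on $[0,1]$.
\item The function $g$ is strictly decreasing on $\mathbb R$. Indeed, $f$ is decreasing on $[0,\infty)$ while $F$ is increasing; more generally $g$ is the derivative of the concave function $\log F$, which is concave by the log-concavity of $\Phi$ recalled at the start of Section~\ref{sec:log_concave}.
\end{enumerate}

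Consequently, on any subinterval $[x_i,x_{i+1}]\subseteq[0,1]$ and for every $x$ in it,
\[
L(x)\le \exp\Bigl(\tfrac{x_{i+1}^2-1}{2}\Bigr), \qquad x+g(x)\ \ge\ x_i+\frac{f(x_{i+1})}{F(x_{i+1})}.
\]
It therefore suffices to check the finitely many numerical inequalities
\[
\exp\Bigl(\tfrac{x_{i+1}^2-1}{2}\Bigr) < x_i+\frac{f(x_{i+1})}{F(x_{i+1})}
\]
for a suitable partition. Strictness of the lemma then follows from strictness at each grid step.

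I would take the partition $0<\tfrac14<\tfrac12<\tfrac34<1$. Using $F(y)=\sqrt{2\pi}\,\Phi(y)$ and standard values of $\Phi$, the four checks give approximately:
\begin{center}
\begin{tabular}{ccc}
interval & upper bound for $L$ & lower bound for $x+g(x)$ \\
$[0,\tfrac14]$ & $e^{-0.469}\approx 0.626$ & $0.969/1.501\approx 0.646$ \\
$[\tfrac14,\tfrac12]$ & $e^{-0.375}\approx 0.687$ & $0.25+0.8825/1.733\approx 0.759$ \\
$[\tfrac12,\tfrac34]$ & $e^{-0.219}\approx 0.803$ & $0.5+0.755/1.939\approx 0.889$ \\
$[\tfrac34,1]$ & $1$ & $0.75+0.6065/2.109\approx 1.037$ \\
\end{tabular}
\end{center}

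The main obstacle is making these numerical comparisons rigorous. The margins are small, most notably about $0.02$ on $[0,\tfrac14]$. So I would bound $\Phi(y)$ from above and $e^{-y^2/2}$ from below by certified rational enclosures, for example truncated alternating Taylor series with explicit remainder. If a margin proves too tight, I would refine the partition near $0$, say splitting at $\tfrac18$.
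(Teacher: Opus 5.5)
Your proof is correct, but it takes a different route from the paper's. The reduction holds: on $[x_i,x_{i+1}]$ you have $L(x)\le L(x_{i+1})$ and $x+f(x)/F(x)\ge x_i+f(x_{i+1})/F(x_{i+1})$. Only non-strict monotonicity of $f/F$ on $[0,1]$ is needed for this, and your elementary remark ($f$ decreasing, $F$ increasing) already supplies it. The four entries of your table are accurate, with margins of about $0.020$, $0.072$, $0.086$ and $0.038$.

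The certification you flag as the main obstacle is easy. The crude bounds $f(y)\ge 1-\tfrac{y^2}{2}$, $F(y)\le\sqrt{\pi/2}+y$ and $e^{s}\ge 1+s+\tfrac{s^2}{2}$ already settle the first three intervals; for instance $0.96875/1.5034>0.644>0.634\ge e^{-15/32}$. On $[\tfrac34,1]$ it suffices to use $e^{-1/2}\ge 1-\tfrac12+\tfrac18-\tfrac1{48}>0.6$ and $F(1)\le\sqrt{\pi/2}+\int_0^1(1-\tfrac{t^2}{2}+\tfrac{t^4}{8})\,dt<2.112$, which give $0.75+0.6/2.112>1.03$.

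The paper argues analytically instead, with a single numerical input. The ratio $r=f/F$ solves the Riccati equation $y'=-xy-y^2$. The function $\exp(\tfrac{x^2-1}{2})-x$ satisfies $y'+xy+y^2=e^{x^2-1}-1\le 0$ on $[0,1]$, so it is a subsolution. Their difference $h$ therefore obeys a linear differential inequality $h'+c(x)h\ge 0$. An integrating factor then carries positivity from $h(0)=\sqrt{2/\pi}-e^{-1/2}>0$ to all of $[0,1]$.

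The paper's route needs no grid and only one comparison of constants. It also explains why $[0,1]$ is the natural interval: it is exactly where $e^{x^2-1}\le 1$. The price is spotting that the left-hand side nearly solves the Mills-ratio Riccati equation. Your route needs no such insight and applies to any inequality whose two sides are built from monotone pieces with a positive gap on a compact interval. The cost is several certified evaluations of $\Phi$, and the margin of about $0.02$ near $x=0$ is verified rather than derived.
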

\begin{proof}
    Define $r(x) = \frac{f(x)}{F(x)}$ and $g(x) = \exp\left (\frac{x^2-1}{2} \right ) - x$. We want to show that the difference function
    $$
    h(x) \coloneqq r(x) - g(x)
    $$
    satisfies $h(x) > 0$ for all $x \in [0,1]$. To do so, we make the following two observations: first, a direct calculation shows that the function $r$ satisfies the differential equation
    $$
    y'(x) = -xy(x)-y(x)^2.
    $$
    Second, the function $g$ satisfies the identity
    $$
    g'(x)+xg(x)+g(x)^2 = e^{x^2-1}-1
    $$
    Since $e^{x^2-1}-1 \leq 0$ for every $x \in [0,1]$, we obtain
    $$
    g'(x) \leq -xg(x)-g(x)^2.
    $$    
    Hence, $r$ satisfies the differential equation $y'=-xy-y^2$ while $g$ satisfies the corresponding differential inequality $y' \leq -xy-y^2$.

\smallskip
Using the latter properties of $r$ and $g$, it follows that on the interval $[0,1]$ the derivative of $h$ satisfies
\begin{equation}
    \begin{split}
        h' &\geq r' + xg+g^2 \\
        & = -xr-r^2+xg+g^2 \\
        & =-x(r-g) - (r^2-g^2) \\
        & = -xh-(r+g)h
    \end{split}
\end{equation}
    Thus, for every $x \in [0,1]$ we have
    \begin{equation}\label{eq:h_prime_inequality}
        h'(x) + (x+r(x)+g(x))h(x) \geq 0.
    \end{equation}
    Define
    $$
    k(x) = \int_0^x (t+r(t)+g(t)) \, dt, \quad I(x)=e^{k(x)}. 
    $$
    Then
    $$
    (Ih)' = I(h'+k'h),
    $$
    which shows in combination with inequality \eqref{eq:h_prime_inequality} that $Ih$ is non-decreasing on $[0,1]$. Observe further that
    $$
    h(0) = r(0) - g(0) = \sqrt{\frac{2}{\pi}} - e^{-\frac{1}{2}} > 0.
    $$
    Since $I$ is strictly positive, this implies $I(0)h(0) > 0$. The property that $Ih$ is non-decreasing implies that $Ih > 0$ on $[0,1]$. Since $I > 0$ it follows that $h >0$ which yields the claim.
\end{proof}

We are prepared to prove Proposition \ref{prop:log_concave}.

\begin{proof}[Proof of Proposition \ref{prop:log_concave}]
    Fix $a \geq 1$ and $d \geq 0$. It suffices to show that for every $x \in [0,\frac{d}{2}]$ it holds that $(\log J)''(x) < 0$. To do so, we define
    $$
    f(t) = e^{-\frac{t^2}{2}}, \quad F(x) = \int_{-\infty}^x f(t) \, dt, \quad T(x) = \int_x^\infty f(t) \, dt.
    $$
    With the latter definitions it holds that
    $$
    J(x) = aF(x) + F(x-d)
    $$
    Moreover, define $N = JJ''-(J')^2$. Then $(\log J)''(x) < 0$ is equivalent to the condition that $N(x) < 0$.

    It will be convenient to introduce the linear function
    $$
    z(x) = d-x.
    $$
    With this function, the condition that $x \in [0,\frac{d}{2}]$ can be expressed via the condition that $z(x) \geq x \geq 0$. Moreover, it is easy to see that the function $J$ can be expressed via
    $$
    J(x) = aF(x) + T(z(x)).
    $$
    The first and second derivatives of the function $x \mapsto T(z(x))$ satisfies
    $$
    \frac{d}{dx} T(z(x)) = f(z(x)), \quad \frac{d^2}{dx^2} T(z(x)) = z(x)f(z(x)).
    $$
    Therefore,
    $$
    J'(x) = a f(x) + f(z(x)), \quad J''(x)=-axf(x) + z(x) f(z(x)).
    $$
    We now write $N(x)$ as a quadratic in $a$: using the definition of $J$ and the expression derived for $J'$ and $J''$ we obtain that
    $$
    N(x) = a^2A(x)+aB(x)+C(x)
    $$
    where the functions $A,B,C$ are defined via
    \begin{equation}
        \begin{split}
            A(x) &= -x F(x) f(x) - f(x)^2, \\
            B(x) &= F(x)z(x)f(z(x)) - 2 f(x)f(z(x)) - xf(x)T(z(x)), \\
            C(x) &= T(z(x)) z(x) f(z(x)) - f(z(x))^2.
        \end{split}
    \end{equation}
    Consider the following three inequalities between the functions $A,B$ and $C$:
    \begin{enumerate}
        \item[(i)] $A(x) \leq 0$ for every $x \geq 0$,
        \item[(ii)] $C(x) \leq 0$ for every $x$ so that $z(x) \geq 0$,
        \item[(iii)] $B(x) < -A(x)$ for every $x$ satisfying $0\leq x \leq z(x)$.
    \end{enumerate}
    If (i), (ii) and (iii) hold and $0\leq x \leq z(x)$, then using $a \geq 1$ we obtain
    $$
    N(x) = a^2A(x) + aB(x) + C(x) < a(a-1)A(x) \leq 0,
    $$
    showing that $J$ is strictly log-concave on the interval $[0,\frac{d}{2}]$. Thus, it suffices to show the validity of the inequalities (i), (ii) and (iii).

    \textit{Proof of (i).} Since $xF(x)f(x) \geq 0$ for every $x \geq 0$, the inequality stated in (i) follows immediately.

    \textit{Proof of (ii).} We have that
    $$
    C(x) = f(z(x)) ( T(z(x))z(x) - f(z(x)) ). 
    $$
    Hence, it suffices to show that
    $$
    f(z(x)) - T(z(x))z(x) \geq 0.
    $$
    To do so, we observe that
    $$
    \int_z^\infty t f(t) \, dt = f(z).
    $$
    Therefore,
    $$
    f(z(x)) - T(z(x))z(x) = \int_{z(x)}^\infty (t-z(x)) f(t) \, dt \geq 0.
    $$
    
    \textit{Proof of (iii).} By definition of the function $B$ and the positivity of $T$ and $f$, we have that
    $$
    B(x) \leq F(x) z(x) f(z(x)).
    $$
    It therefore suffices to prove that
    $$
    F(x) z(x) f(z(x)) \leq x F(x) f(x) + f(x)^2.
    $$
    Dividing both sides by $F(x)f(x)>0$ we obtain
    $$
    \frac{z(x)f(z(x))}{f(x)} \leq x + r(x),
    $$
    where $r$ denotes the ratio
    $$
    r(x) = \frac{f(x)}{F(x)}.
    $$
    Consider $h(u) = u f(u)$. Then $h'(u) = f(u) (1-u^2)$ which implies that $h$ increases on $[0,1]$ and decreases on $[1,\infty)$ and it attains its maximum at $t=1$. Now fix $x \geq 0$ and $z(x) \geq x$. If $x \geq 1$ then $z(x) \geq x \geq 1$ which yields
    $$
    z(x) f(z(x)) = h(z(x)) \leq h(x) = x f(x),
    $$
    where we used that $h$ is decreasing on $[1,\infty)$. This gives
    $$
    \frac{z(x)f(z(x))}{f(x)} \leq x \leq x + r(x).
    $$
    It remains to consider the case when $z(x) \geq x \geq 0$ and $x \in [0,1]$. Using that $x=1$ is the global maximum of $h$ on $[0,\infty)$ we obtain that
    $$
    \frac{z(x)f(z(x))}{f(x)} = \frac{h(z(x))}{f(x)} \leq \frac{h(1)}{\varphi(x)} = \exp\left(\frac{x^2-1}{2}\right).
    $$
    Finally, using Lemma \ref{lma:ode}, we have
    $$
    \exp\left(\frac{x^2-1}{2}\right) \leq x + r(x)
    $$
    and this completes the proof of inequality (iii).
\end{proof}

As an application of Proposition \ref{prop:log_concave}, we obtain the following Corollary, which will be frequently applied in the upcoming section.

\begin{corollary}\label{cor:strictly_decreasing}
    Let $p \in (0,\frac{1}{2}]$ and let $\mathbf{d} = de_1$ with $d >0$. Further, let $\mu$ be the measure $\mu = p\gamma + (1-p)\gamma(\cdot - \mathbf{d})$. Then the function
    $$
    t \mapsto \frac{\mu_+(H_{d-t}^+)}{\mu(H_{d-t}^+)}
    $$
    is strictly decreasing on $[0,\frac{d}{2}]$.
\end{corollary}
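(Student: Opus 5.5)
We compute both quantities explicitly through Lemma \ref{lma:elementary} and reduce the claim to the strict log-concavity in Proposition \ref{prop:log_concave}. Put $s=d-t$. Then $\mu_+(H_s^+)=p\varphi(s)+(1-p)\varphi(s-d)$ and $\mu(H_s^+)=1-\mu(H_s^-)=p(1-\Phi(s))+(1-p)(1-\Phi(s-d))$. Since $1-\Phi(u)=\Phi(-u)$ and $\varphi$ is even, substituting $s=d-t$ gives
\[
\mu(H_{d-t}^+)=p\,\Phi(t-d)+(1-p)\,\Phi(t),\qquad \mu_+(H_{d-t}^+)=p\,\varphi(t-d)+(1-p)\,\varphi(t).
\]
So with $G(t):=(1-p)\Phi(t)+p\Phi(t-d)$, the ratio in question is exactly $G'(t)/G(t)=(\log G)'(t)$.

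Next we match $G$ with the function of Proposition \ref{prop:log_concave}. Since $p\in(0,\frac12]$ we have $1-p\ge p>0$, so we set $a:=\frac{1-p}{p}\ge 1$. Then
\[
G(t)=\frac{p}{\sqrt{2\pi}}\Bigl(a\int_{-\infty}^t e^{-\frac12 u^2}\,du+\int_{-\infty}^t e^{-\frac12(u-d)^2}\,du\Bigr)=\frac{p}{\sqrt{2\pi}}\,J(t),
\]
where $J$ is the function from Proposition \ref{prop:log_concave} with parameters $a\ge1$ and $d>0$. Hence $\log G=\log J+\mathrm{const}$.

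By Proposition \ref{prop:log_concave}, $J$ is strictly log-concave on $[0,\frac d2]$. Its proof in fact shows $(\log J)''<0$ pointwise on that interval, which gives $(\log G)''<0$ there. Therefore $(\log G)'$ is strictly decreasing on $[0,\frac d2]$, and this is precisely the claim.

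No step is a real obstacle. The only care needed is in the reflection bookkeeping, namely checking that the weight $1-p$ ends up on the unshifted Gaussian so that $a=\frac{1-p}{p}\ge1$, which is the condition Proposition \ref{prop:log_concave} requires.
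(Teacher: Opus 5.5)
Your proof is correct and follows essentially the same route as the paper. The paper does the substitution $s=d-t$ by passing to the reflected measure $\tilde\mu=(1-p)\gamma+p\gamma(\cdot-\mathbf{d})$ instead of computing explicitly. It then identifies the ratio with $(\log J)'$ for $a=\frac{1-p}{p}\ge 1$ and concludes from Proposition~\ref{prop:log_concave}, exactly as you do.
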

\begin{proof}
    Define the measure $\tilde \mu = (1-p)\gamma + p \gamma(\cdot - \mathbf{d})$. Then we have
    $$
    \mu(H_{d-t}^+) = \tilde \mu(H_t^-), \quad \mu_+(H_{d-t}^+) = \tilde \mu_+(H_t^-).
    $$
    Therefore,
    $$
    \frac{\mu_+(H_{d-t}^+)}{\mu(H_{d-t}^+)} = \frac{\tilde\mu_+(H_{t}^-)}{\tilde\mu(H_{t}^-)} = (\log J)'(t)
    $$
    where $J$ is given as in Proposition \ref{prop:log_concave} with $a=\frac{1-p}{p} \geq 1$. Since $J$ is strictly log-concave on $[0,\frac d2]$ it follows that $(\log J)'$ is strictly decreasing on $[0,\frac d2]$.
\end{proof}

\section{Proof of Theorem \ref{thm:main1} and Theorem \ref{thm:main2}}\label{sec:main_results}

We start with a technical observation that concerns the location of the median $r^*$, defined via the equality $Q_{p,d}(r^*)=\frac12$, relative to the parameter $d$. Recall that for $\alpha \in [0,1]$ and $\beta \geq 0$, the function $Q_{\alpha,\beta}$ is defined by $Q_{\alpha,\beta}(x) = \alpha \Phi(x) + (1-\alpha)\Phi(x-\beta)$.

\begin{lemma}\label{lma:rstar_location}
Let \(p\in (0,\tfrac12 ]\), let \(d > 0\), and let $r^*$ be the unique value such that $Q_{p,d}(r^*)= \frac12$. Then $r^* \in [ \frac d2,d)$.
\end{lemma}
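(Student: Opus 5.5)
Since $Q_{p,d}$ is continuous and strictly increasing on $\R$, with limits $0$ at $-\infty$ and $1$ at $+\infty$, the value $r^*$ is well defined. It therefore suffices to check two things: $Q_{p,d}(\tfrac d2)\le \tfrac12$ and $Q_{p,d}(d) > \tfrac12$. Strict monotonicity then forces $\tfrac d2 \le r^* < d$.

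\emph{Upper bound.} At $x=d$ we have
$$
Q_{p,d}(d) = p\Phi(d) + (1-p)\Phi(0) = p\Phi(d) + \tfrac{1-p}{2}.
$$
Because $d>0$ and $p>0$, we get $\Phi(d)>\tfrac12$ and hence $Q_{p,d}(d) > \tfrac p2 + \tfrac{1-p}{2} = \tfrac12$. This gives $r^*<d$.

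\emph{Lower bound.} At $x=\tfrac d2$, the symmetry $\Phi(-u)=1-\Phi(u)$ gives
$$
Q_{p,d}(\tfrac d2) = p\Phi(\tfrac d2) + (1-p)\bigl(1-\Phi(\tfrac d2)\bigr) = \tfrac12 + (2p-1)\bigl(\Phi(\tfrac d2)-\tfrac12\bigr).
$$
Here $p\le \tfrac12$ gives $2p-1\le 0$, and $d>0$ gives $\Phi(\tfrac d2)-\tfrac12>0$. Hence $Q_{p,d}(\tfrac d2)\le \tfrac12$, with equality exactly when $p=\tfrac12$. This gives $r^*\ge \tfrac d2$. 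In the symmetric case $p=\tfrac12$ the bound is attained, $r^*=\tfrac d2$, which is consistent with the closed endpoint of the interval.

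There is no real obstacle. The only point needing care is the identity at $\tfrac d2$, which uses $\Phi(\tfrac d2 - d)=1-\Phi(\tfrac d2)$ and makes the sign of $2p-1$ decisive.
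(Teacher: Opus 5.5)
Your proof is correct and matches the paper's argument: you use the identity $Q_{p,d}(\tfrac d2)-\tfrac12=(2p-1)\bigl(\Phi(\tfrac d2)-\tfrac12\bigr)\le 0$ and the evaluation $Q_{p,d}(d)=\tfrac12+p\bigl(\Phi(d)-\tfrac12\bigr)>\tfrac12$, and then conclude from the strict monotonicity of $Q_{p,d}$. Your remark that the lower endpoint is attained exactly when $p=\tfrac12$ is also correct, and the paper relies on that fact later in the proof of Theorem~\ref{thm:main2}.
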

\begin{proof}
    Using that $\Phi(-\frac d2) = 1-\Phi(\frac d2)$ as well as $0<p\leq\frac{1}{2}$ and $\Phi(x)>\frac 12$ for every $x > 0$ it follows that
    $$
    Q(\tfrac d2) - \tfrac{1}{2} = (\Phi(\tfrac d 2) - \tfrac 12)(2p-1) \leq 0
    $$
    and hence $Q(\frac d2) \leq \frac 12$. Moreover, we have
    $$
    Q(d) = p(\Phi(d)-\tfrac 12) + \tfrac{1}{2} > \tfrac{1}{2}.
    $$
    Since $Q$ is continuous and strictly increasing, we obtain $r^* \in [\frac d2,d)$.
\end{proof}

We recall that the Cheeger constant for the Gaussian $\gamma$ satisfies $h_\gamma = \sqrt{2/\pi}$. Therefore, Cheeger's inequality for the Gaussian $\gamma$ reads
$$
\gamma_+(E)\ge \sqrt{\frac{2}{\pi}}\min\{\gamma(E),1-\gamma(E)\}.
$$
Now introduce for $\mathbf{d}=de_1$ with $d>0$ the classes of sets $\mathcal{X}$ and $\widetilde{\mathcal{X}}$ via
\begin{equation}
    \begin{split}
        \mathcal{X} &= \{ E \in \mathcal{B}(\R^n) : \gamma(E) < \tfrac{1}{2} ,\, \gamma(E-\mathbf{d}) \leq \tfrac{1}{2} \}, \\
        \widetilde{\mathcal{X}} &= \{ E \in \mathcal{B}(\R^n) : \gamma(E) \geq \tfrac12 ,\, \gamma(E-\mathbf{d}) > \tfrac{1}{2} \}.
    \end{split}
\end{equation}
Together with $\mathcal{A}$ and $\widetilde{\mathcal{A}}$ defined in \eqref{eq:mathcal_A}, it follows that the Borel sets $\mathcal{B}(\R^n)$ split into a union of the form
$$
\mathcal{B}(\R^n) = \mathcal{A} \cup \widetilde{\mathcal{A}} \cup \mathcal{X} \cup \widetilde{\mathcal{X}}.
$$
For $E \in \mathcal{B}(\R^n)$ with $0<\mu(E)<1$ we call the quantity $\frac{\mu_+(E)}{\min \{ \mu(E),1-\mu(E) \}}$ the Cheeger ratio of $E$ with respect to the measure $\mu$.

\begin{theorem}\label{thm:cheeger_mixture}
Let \(p\in(0,\tfrac12]\), \(d > 0\), and let \(r^* > 0\) be the unique value such that \(Q(r^*) = Q_{p,d}(r^*)=\tfrac12\).
Then the Cheeger constant of $\mu = p\gamma + (1-p)\gamma(\cdot - \mathbf{d})$ satisfies
\[
h_\mu=\min_{r\in[0,r^*]} \bigl(\log Q \bigr)'(r).
\]
If $\mathcal{O} \subset [0,r^*]$ denotes the set of minimizers, then $E$ is a Cheeger set for $\mu$ if and only if $E=H_t^\pm$ for some $t \in \mathcal{O}$.
\end{theorem}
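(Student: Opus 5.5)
We will prove the theorem by comparing the Cheeger ratio of an arbitrary Borel set $E$ with $0<\mu(E)<1$ against half-space ratios, using the partition $\mathcal B(\R^n)=\mathcal A\cup\widetilde{\mathcal A}\cup\mathcal X\cup\widetilde{\mathcal X}$. Throughout, $Q=Q_{p,d}$. By Lemma \ref{lma:elementary} we have $\mu(H_r^-)=Q(r)$ and $\mu_+(H_r^\pm)=Q'(r)$, so for $r\le r^*$ the Cheeger ratio of $H_r^-$ (equivalently of its complement $H_r^+$) is $Q'(r)/Q(r)=(\log Q)'(r)$. Hence
\[
h_\mu\le \min_{[0,r^*]}(\log Q)' =: m_0,
\]
and it remains to show that every $E$ has Cheeger ratio $\ge m_0$, with equality only for the stated half-spaces. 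Since the Cheeger ratio is invariant under complementation, and $E\mapsto E^c$ maps $\mathcal A$ to $\widetilde{\mathcal A}$ and $\mathcal X$ to $\widetilde{\mathcal X}$ (up to the boundary cases $\gamma=\tfrac12$), it suffices to treat $E\in\mathcal A\cup\mathcal X$, handling the boundary conventions with care.

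\emph{Case $E\in\mathcal A$.} Choose $r$ with $\mu(E)=Q(r)$. Theorem \ref{thm:isop_weighted}(i) gives $r\in[0,d]$ and $\mu_+(E)\ge Q'(r)$, with equality only if $E=H_r^-$. If $r\le r^*$, then $\mu(E)\le\tfrac12$ and the ratio is at least $(\log Q)'(r)\ge m_0$. If $r\in(r^*,d]$, the ratio is at least $Q'(r)/(1-Q(r))$, which is the ratio of $H_r^+$, i.e. $\mu_+(H^+_{d-t})/\mu(H^+_{d-t})$ with $t=d-r\in[0,d-r^*)\subset[0,\tfrac d2]$ by Lemma \ref{lma:rstar_location}. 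Corollary \ref{cor:strictly_decreasing} shows this quantity is strictly decreasing in $t$, hence strictly increasing in $r$. So for $r>r^*$ the ratio strictly exceeds its value at $r^*$, which is $(\log Q)'(r^*)\ge m_0$. Equality therefore forces $r\le r^*$, $r\in\mathcal O$, and $E=H_r^-$.

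\emph{Case $E\in\mathcal X$.} Here $\gamma(E)<\tfrac12$ and $\gamma(E-\mathbf d)\le\tfrac12$, so $\mu(E)<\tfrac12$. By Gaussian Cheeger,
\[
\mu_+(E)\ge p\gamma_+(E)+(1-p)\gamma_+(E-\mathbf d)\ge\sqrt{2/\pi}\,\mu(E),
\]
so the ratio is at least $\sqrt{2/\pi}$. We then need $m_0<\sqrt{2/\pi}$, and this is the main obstacle. The plan is to exhibit an admissible $r$ with $(\log Q)'(r)<\sqrt{2/\pi}$. At $r=0$ we have $Q'(0)=p\varphi(0)+(1-p)\varphi(d)$ and $Q(0)=\tfrac p2+(1-p)\Phi(-d)$. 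Comparing this to $\sqrt{2/\pi}=\varphi(0)/\Phi(0)$ reduces to the Mills-type inequality $\varphi(d)/\Phi(-d)>\varphi(0)/\Phi(0)$, which goes the wrong way. So instead try $r=r^*$, where the ratio is $2Q'(r^*)$, and use concavity/symmetry of $I_\gamma$ to compare it with the Gaussian value. If this comparison fails, the fallback is a sharper bound in $\mathcal X$ that keeps the two terms $pI_\gamma(\gamma(E))+(1-p)I_\gamma(\gamma(E-\mathbf d))$ and minimizes over the constraint via the shift inequality, Theorem \ref{thm:shift_inequality}. Strictness of the comparison rules out Cheeger sets in $\mathcal X$; the set $\widetilde{\mathcal X}$ is then handled by complementation. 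Finally, uniqueness statements for $\widetilde{\mathcal A}$ follow symmetrically from Theorem \ref{thm:isop_weighted}(ii), yielding $E=H_t^{+}$ for $t\in\mathcal O$.
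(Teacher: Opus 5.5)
Your architecture is the paper's own. You use the same partition $\mathcal A\cup\widetilde{\mathcal A}\cup\mathcal X\cup\widetilde{\mathcal X}$, Theorem~\ref{thm:isop_weighted} with Corollary~\ref{cor:strictly_decreasing} on $\mathcal A$ and $\widetilde{\mathcal A}$, and the Gaussian Cheeger inequality on $\mathcal X$. Your handling of $r\in(r^*,d]$ via $t=d-r\in[0,\tfrac d2)$ is exactly the paper's argument.

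\textbf{The gap.} You flag the comparison $\min_{[0,r^*]}(\log Q)'<\sqrt{2/\pi}$ as the main obstacle and never establish it. Without it, the bound ``ratio $\ge\sqrt{2/\pi}$'' on $\mathcal X$ gives neither $h_\mu\ge\min_{[0,r^*]}(\log Q)'$ nor the exclusion of Cheeger sets from $\mathcal X\cup\widetilde{\mathcal X}$. So, as written, both the value of $h_\mu$ and the characterization of Cheeger sets are unproved.

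The missing step is one line and needs neither concavity of $I_\gamma$ nor the shift inequality. Since $Q(r^*)=\tfrac12$,
\[
(\log Q)'(r^*)=2Q'(r^*)=2p\,\varphi(r^*)+2(1-p)\,\varphi(r^*-d)<2\varphi(0)=\sqrt{2/\pi}.
\]
This holds because $\varphi$ attains its maximum only at $0$, $p>0$, and $r^*\ge\tfrac d2>0$ by Lemma~\ref{lma:rstar_location}. This strict inequality is exactly what the paper uses to dispose of $\mathcal X$ and $\widetilde{\mathcal X}$.

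\textbf{A smaller point: complementation.} The Cheeger ratio is not invariant under complementation for the lower outer Minkowski content used here, because $\mu_+$ is sensitive to null sets. For instance, $E=H_0^-\cup\{x_1=1\}$ has $\mu_+(E)=Q'(0)+2Q'(1)$, while $\mu_+(E^c)=Q'(0)$. So $\widetilde{\mathcal X}$ should not be handled ``by complementation''.

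Treat it directly instead, as the paper does. For $E\in\widetilde{\mathcal X}$ one has $\mu(E)>\tfrac12$. Apply the Gaussian Cheeger inequality in the form $\gamma_+(F)\ge\sqrt{2/\pi}\,(1-\gamma(F))$ to $F=E$ and $F=E-\mathbf d$; this gives $\mu_+(E)\ge\sqrt{2/\pi}\,(1-\mu(E))$. The displayed strict inequality above then finishes the case. Your direct use of Theorem~\ref{thm:isop_weighted}(ii) on $\widetilde{\mathcal A}$ is fine as it stands and needs no complementation.
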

\begin{proof}
Define the minimum $\sigma>0$ by $\sigma = \min_{r\in[0,r^*]} \bigl(\log Q \bigr)'(r)$. We start by showing that $h_\mu \geq \sigma$. To do so, we apply a comparison argument which is split into Case 1 to Case 4.2.

\smallskip
    \textit{Case 1.} Let $E \in \mathcal{X}$. Since $\gamma(E) < \frac12$ and $\gamma(E-\mathbf{d}) \leq \frac{1}{2}$ it follows that $\mu(E) < \frac 12$. Hence,
$$
\min \{ \mu(E),1-\mu(E) \} = \mu(E) = p \gamma(E) + (1-p)\gamma(E-\mathbf{d}).
$$
Moreover, Cheeger's inequality for the Gaussian implies that
$$
\gamma_+(E) \geq \sqrt{\frac{2}{\pi}} \gamma(E), \quad \gamma_+(E-\mathbf{d}) \geq \sqrt{\frac{2}{\pi}} \gamma(E-\mathbf{d}).
$$
Hence, we have
$$
\frac{\mu_+(E)}{\min \{ \mu(E),1-\mu(E) \}} = \frac{\mu_+(E)}{\mu(E)} \geq \frac{p\gamma_+(E) + (1-p) \gamma_+(E-\mathbf{d})}{p \gamma(E) + (1-p)\gamma(E-\mathbf{d})} \geq \sqrt{\frac{2}{\pi}}.
$$
Consider the half-space $H_{r^*}^- \in \mathcal{A}$. Then
$$
\frac{\mu_+(H_{r^*}^-)}{\min \{ \mu(H_{r^*}^-),1-\mu(H_{r^*}^-) \}} = 2 ( p\varphi(r^*) + (1-p)\varphi(r^*-d) ). 
$$
Since $r^* > 0$ by Lemma \ref{lma:rstar_location}, it follows that
\begin{equation}\label{eq:strict}
    \frac{\mu_+(H_{r^*}^-)}{\min \{ \mu(H_{r^*}^-),1-\mu(H_{r^*}^-) \}} < 2 \varphi(0) = \sqrt{\frac{2}{\pi}}.
\end{equation}
Thus every Cheeger ratio of $E \in \mathcal X$ is strictly larger than the Cheeger ratio of the set $H_{r^*}^-$ which belongs to $\mathcal{A}$, implying that no set in $\mathcal{X}$ is a Cheeger set.

\textit{Case 2.} Let $E \in \widetilde{\mathcal{X}}$. Similar to Case 1, we first observe that $\mu(E) > \frac{1}{2}$ and apply Cheeger's inequality for the Gaussian measure to obtain
$$
\gamma_+(E) \geq \sqrt{\frac{2}{\pi}} (1- \gamma(E)), \quad \gamma_+(E-\mathbf{d}) \geq \sqrt{\frac{2}{\pi}} (1-\gamma(E-\mathbf{d})).
$$
An analogous argument as in Case 1 yields
$$
\frac{\mu_+(E)}{\min \{ \mu(E),1-\mu(E) \}} \geq \sqrt{\frac{2}{\pi}}.
$$
Together with the strict inequality \eqref{eq:strict}, it follows that the Cheeger ratio of every $E \in \widetilde{\mathcal X}$ is strictly larger than the Cheeger ratio of $H_{r^*}^- \in \mathcal{A}$, thereby showing that no set in $\widetilde{\mathcal X}$ is a Cheeger set for $\mu$.

\textit{Case 3.1.} Suppose that $E \in \mathcal{A}$ satisfies $\mu(E) \leq \frac 12$. Let $r \in \R$ be the unique value such that $\mu(E) = \mu(H_r^-) = Q(r)$. From Theorem \ref{thm:isop_weighted} it follows that $r \geq 0$ and since $\mu(E) \leq \frac 12$ we have $r \leq r^*$. Applying Theorem \ref{thm:isop_weighted} implies
$$
\mu_+(E) \geq \mu_+(H_r^-) = Q'(r).
$$
Therefore, we obtain
$$
\frac{\mu_+(E)}{\min \{ \mu(E),1-\mu(E) \}} = \frac{\mu_+(E)}{\mu(E)} \geq \frac{Q'(r)}{Q(r)} = (\log Q)'(r) \geq \sigma.
$$

\textit{Case 3.2.} Suppose that $E \in \mathcal{A}$ satisfies $\mu(E) > \frac 12$. Again, let $r \in \R$ such that $\mu(E) = \mu(H_r^-) = Q(r)$. Since $\mu(E) > \frac12$ it follows that $r^* < r$ and from Theorem \ref{thm:isop_weighted} we obtain $r \leq d$, hence $r \in (r^*,d]$. Moreover, Theorem \ref{thm:isop_weighted} shows that $\mu_+(E) \geq \mu_+(H_r^-) = Q'(r)$. Hence,
$$
\frac{\mu_+(E)}{\min \{ \mu(E),1-\mu(E) \}} = \frac{\mu_+(E)}{1-\mu(E)} = \frac{\mu_+(E)}{\mu(H_r^+)} \geq \frac{\mu_+(H_r^-)}{\mu(H_r^+)} = \frac{\mu_+(H_r^+)}{\mu(H_r^+)}.
$$
By Lemma \ref{lma:rstar_location} we have $(r^*,d] \subset [\frac{d}{2},d]$. Applying Corollary \ref{cor:strictly_decreasing} shows
$$
\frac{\mu_+(H_r^+)}{\mu(H_r^+)} > \frac{\mu_+(H_{r^*}^+)}{\mu(H_{r^*}^+)} = \frac{\mu_+(H_{r^*}^-)}{\min \{ \mu(H_{r^*}^-),1-\mu(H_{r^*}^-) \}} =  (\log Q)'(r^*).
$$
Similarly as in Case 1 and Case 2, this shows that the Cheeger ratio of $H_{r^*}^-$ is strictly smaller than the Cheeger ratio of every set $E \in \mathcal{A}$ satisfying $\mu(E) > \frac 12$.

\textit{Case 4.1.} Let $E \in \widetilde{\mathcal{A}}$ satisfy $\mu(E) < \frac 12$. Let $s \in \R$ be the unique value such that $\mu(E) = \mu(H_s^+)$. Then Theorem \ref{thm:isop_weighted} yields
$$
\frac{\mu_+(E)}{\min \{ \mu(E),1-\mu(E) \}} = \frac{\mu_+(E)}{\mu(E)} = \frac{\mu_+(E)}{\mu(H_s^+)} \geq \frac{\mu_+(H_s^+)}{\mu(H_s^+)}.
$$
Using the assumption $\mu(E) < \frac 12$ together with Theorem \ref{thm:isop_weighted} and Lemma \ref{lma:rstar_location} it follows that $s \in (r^*,d] \subseteq [\frac{d}{2},d]$. Therefore, Corollary \ref{cor:strictly_decreasing} implies
$$
\frac{\mu_+(H_s^+)}{\mu(H_s^+)} > \frac{\mu_+(H_{r^*}^+)}{\mu(H_{r^*}^+)} = \frac{\mu_+(H_{r^*}^-)}{\mu(H_{r^*}^-)} = (\log Q)'(r^*) \geq \sigma.
$$

\textit{Case 4.2.} Finally, let $E \in \widetilde{\mathcal{A}}$ such that $\mu(E) \geq \frac 12$. If $s$ is the unique value such that $\mu(E) = \mu(H_s^+)$ then the condition $\mu(E) > \frac12$ together with Theorem \ref{thm:isop_weighted} implies $s \in [0,r^*]$. Thus,
\begin{equation}
    \begin{split}
        \frac{\mu_+(E)}{\min \{ \mu(E),1-\mu(E) \}} &= \frac{\mu_+(E)}{1-\mu(H_s^+)} = \frac{\mu_+(E)}{\mu(H_s^-)} \geq \frac{\mu_+(H_s^+)}{\mu(H_s^-)} = \frac{\mu_+(H_s^-)}{\mu(H_s^-)} \\
        & = (\log Q)'(s) \geq \sigma.
    \end{split}
\end{equation}

\smallskip
Combining the above cases, it follows that $h_\mu \geq \sigma$. Moreover, if $t \in \mathcal{O}$ is a minimizer then the set $H_t^-$ belongs to $\mathcal{A}$ and both $H_t^-$ and $H_t^+$ are Cheeger sets for $\mu$. This shows that $h_\mu = \sigma$.

\smallskip
It remains to show that if $E$ is a Cheeger set for $\mu$ then there exists $t \in \mathcal{O}$ such that $E = H_t^-$ or $E=H_t^+$. To show this, observe that if $E$ is a Cheeger set then it follows from the above that the relevant cases are Case 3.1 and Case 4.2, i.e., $E \in \mathcal{A}$ with $\mu(E) \leq \frac12$ or $E \in \widetilde{\mathcal{A}}$ with $\mu(E) \geq \frac12$. In the first case, it follows from the inequalities discussed in Case 3.1 that $\mu_+(E) = \mu_+(H_t^-)$ for some $t \in \mathcal{O}$. Theorem \ref{thm:isop_weighted} therefore shows that $E=H_t^-$. Using an analogous argument for the second case implies that if $E \in \widetilde{\mathcal{A}}$ with $\mu(E) \geq \frac12$ then $\mu_+(E) = \mu_+(H_s^+)$ for some $s \in \mathcal{O}$ and Theorem \ref{thm:isop_weighted} yields $E=H_s^+$. Consequently, we have
$
E \in \{ H_t^\pm : t \in \mathcal{O} \}.
$
\end{proof}

\begin{proof}[Proof of Theorem \ref{thm:main1}]
    By applying a reflection along the hyperplane $\{ x_1 = \tfrac{d}{2} \}$, we obtain an analogous statement for Theorem \ref{thm:cheeger_mixture} in the case where $p\geq\tfrac12$. If $a,b \in \R^n$ satisfy $|a-b|=d$ then the general Gaussian mixture $\mu = p\gamma(\cdot - a) + (1-p) \gamma(\cdot -b)$ arises from the special measure $\tilde \mu = p\gamma + (1-p) \gamma(\cdot -\mathbf{d})$ via a rigid transformation. Theorem \ref{thm:main1} therefore follows from Theorem \ref{thm:cheeger_mixture} by applying a rigid transformation that sends the vector $\mathbf{d}$ to $b-a$.
\end{proof}

\begin{proof}[Proof of Theorem \ref{thm:main2}]
    In the case where $p = \frac12$, we have that the value $r^*$ for the measure $\mu = \frac{1}{2}\gamma + \frac12 \gamma(\cdot - \mathbf{d})$ is given by $r^* = \frac{d}{2}$. It follows from an equivalent formulation of Corollary \ref{cor:strictly_decreasing}, that $Q=Q_{\frac12,d}$ is strictly log-concave on $[0,\frac d2]$. Hence, $(\log Q)'$ is strictly decreasing on that interval and the unique minimizer is $\mathcal{O}= \{ \frac d 2\}$. Hence,
$$
h_\mu = (\log Q)'(\tfrac d2) = 2\varphi(\tfrac{d}{2}) = \sqrt{\frac{2}{\pi}} e^{-\frac18 d^2}
$$
and the unique Cheeger sets are given by $H_{\frac d2}^\pm$. Theorem \ref{thm:main2} then follows with the same reasoning as Theorem \ref{thm:main1} followed from Theorem $\ref{thm:cheeger_mixture}$.
\end{proof}

\section{Proof of Theorem \ref{thm:main3}}\label{sec:33}

\begin{lemma}\label{lma:number_of_zeros}
    Let $p \in (0,\frac12]$, let $d >0$, and let $f \coloneqq (\log Q_{p,d})'$. Then $f'$ has at most two zeros on $\R$.
\end{lemma}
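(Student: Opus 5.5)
The plan is to reduce the zeros of $f'$ to the zeros of a difference of two logarithmic derivatives, and then count sign changes using a first-order ODE comparison of the kind used in Lemma \ref{lma:ode}. Write $Q=Q_{p,d}$ and let $q:=Q'=p\varphi+(1-p)\varphi(\cdot-d)>0$ be its density. Set $g:=(\log q)'=q'/q$. Then $f=q/Q>0$ and
$$
f'=\frac{q'}{Q}-\frac{q^2}{Q^2}=f\,(g-f).
$$
Hence the zeros of $f'$ are exactly the zeros of $u:=g-f$. Differentiating gives the key identity
$$
u'=g'-f'=g'-f\,u,
$$
so at every zero $x_0$ of $u$ we have $u'(x_0)=g'(x_0)$.

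\textbf{Step 1: the sign of $g'$.} Writing $q(x)=\varphi(x)\bigl(p+(1-p)e^{dx-d^2/2}\bigr)$, one gets $\log q=-\tfrac{x^2}{2}+\log\bigl(p+(1-p)e^{dx-d^2/2}\bigr)+\mathrm{const}$. With the logistic function $s(x)=\frac{(1-p)e^{dx-d^2/2}}{p+(1-p)e^{dx-d^2/2}}$ this gives
$$
g'(x)=-1+d^2\,s(x)\bigl(1-s(x)\bigr).
$$
Since $s$ is strictly increasing from $0$ to $1$, the function $s(1-s)$ strictly increases up to its maximum $\tfrac14$ and then strictly decreases. Consequently:
\begin{itemize}
\item if $d<2$, then $g'<0$ everywhere;
\item if $d=2$, then $g'\le 0$ with a single zero;
\item if $d>2$, there are $\alpha<\beta$ with $g'<0$ on $(-\infty,\alpha)\cup(\beta,\infty)$, $g'>0$ on $(\alpha,\beta)$, and $g''(\alpha)>0>g''(\beta)$.
\end{itemize}

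\textbf{Step 2: the sign of $u$ at $\pm\infty$.} As $x\to+\infty$ we have $f\to0$ and $g\sim -(x-d)\to-\infty$, so $u<0$ eventually. As $x\to-\infty$ the component $p\Phi$ dominates, so $g=-x+O(e^{dx})$. Meanwhile $f=\frac{p\varphi(x)+(1-p)\varphi(x-d)}{p\Phi(x)+(1-p)\Phi(x-d)}$ should behave like the Mills ratio $\varphi/\Phi(x)=|x|+|x|^{-1}+O(|x|^{-3})$. This would give $u\approx-|x|^{-1}<0$ for $x\to-\infty$. Making this expansion precise, and checking that the exponentially small contribution of the second component cannot overturn the $|x|^{-1}$ gap, is the step I expect to be the main technical obstacle. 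I would first try to bound $f-g$ directly by a Mills-ratio inequality applied to each component separately.

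\textbf{Step 3: counting.} On any interval where $g'<0$, every zero of $u$ is a strict downcrossing ($u'<0$ there). So $u$ can vanish at most once on such an interval, and only if it is positive just before. Since $u<0$ near $-\infty$, $u$ has no zero on $(-\infty,\alpha)$ (respectively on all of $\R$ when $d<2$). On $(\alpha,\beta)$ every zero is a strict upcrossing, so there is at most one. On $(\beta,\infty)$ there is at most one downcrossing.

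It remains to handle the endpoints $\alpha$ and $\beta$, and the single degenerate point when $d=2$. If $u(\alpha)=0$, then $u'(\alpha)=0$ and $u''(\alpha)=g''(\alpha)-f'(\alpha)u(\alpha)-f(\alpha)u'(\alpha)=g''(\alpha)>0$. So $\alpha$ is a strict local minimum with $u\ge0$ nearby, which contradicts $u<0$ on $(-\infty,\alpha)$. If $u(\beta)=0$, then $\beta$ is a strict local maximum, so $u<0$ on both sides near $\beta$. In that case $u$ has no upcrossing in $(\alpha,\beta)$ after which it could return to zero at $\beta$ from above, and no downcrossing in $(\beta,\infty)$ (it is already negative there). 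That accounts for at most two zeros.

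The case $d=2$ is handled the same way, with a third-order expansion at the single degenerate point. Altogether $u$, and hence $f'$, has at most two zeros on $\R$.
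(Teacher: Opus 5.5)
Your reduction is sound and is essentially the paper's. Your $g=Q''/Q'$ is the paper's $h$, your formula for $g'$ is its formula for $h'$, and your $u=g-f$ equals $-F/Q$ for the paper's $F=Q(f-h)$. The crossing analysis in Step~3, including the second-order checks at $\alpha$ and $\beta$, is also correct.

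The gap is Step~2 at $-\infty$, which you only sketch, and for $d>2$ the whole conclusion rests on it. Consider zeros $x_1<\alpha<x_2<\beta<x_3$, with $u$ positive on $(-\infty,x_1)$, negative on $(x_1,x_2)$, positive on $(x_2,x_3)$ and negative on $(x_3,\infty)$. This configuration is compatible with everything else you establish: downcrossings where $g'<0$, an upcrossing where $g'>0$, and $u<0$ near $+\infty$. So without the sign of $u$ at $-\infty$ you only get at most three zeros.

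The route you suggest does not close this directly either. For $x<0$ one has
$f=w_1\frac{\varphi(x)}{\Phi(x)}+w_2\frac{\varphi(x-d)}{\Phi(x-d)}$ and $g=|x|+v_2d$, where $w_2=(1-p)\Phi(x-d)/Q(x)$, $w_1=1-w_2$ and $v_2=(1-p)\varphi(x-d)/Q'(x)$. The first-order Mills bound $\varphi(y)/\Phi(y)>|y|$ therefore only yields $f-g>(w_2-v_2)d$. But $w_2<v_2$ is equivalent to $\Phi(x-d)/\varphi(x-d)<\Phi(x)/\varphi(x)$, which holds by log-concavity of $\Phi$, so this lower bound has the wrong sign. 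You would need the second-order gain $\varphi(y)/\Phi(y)-|y|\gtrsim |y|^{-1}$ and a check that it beats the exponentially small quantity $v_2-w_2$.

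The missing idea is the integrating factor $Q$. Since $f=Q'/Q$, your identity $u'=g'-fu$ is the same as $(Qu)'=Qg'$. Moreover $Qu=Qg-Q'\to 0$ as $x\to-\infty$, for three reasons:
$g(x)$ is a convex combination of $-x$ and $d-x$, so $|g(x)|\le |x|+d$; $Q(x)\le \Phi(x)$; and $Q'(x)\to 0$.
Hence $Q(x)u(x)=\int_{-\infty}^x Q(t)g'(t)\,dt<0$ for every $x\le\alpha$, and for every $x\in\R$ when $d\le 2$. This is exactly the missing sign information.

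This is the paper's proof, with $F=-Qu$, $F'=-Qh'$ and $F(-\infty)=0$. Once you have it, $Qu$ is strictly decreasing on $(-\infty,\alpha]$, strictly increasing on $[\alpha,\beta]$ and strictly decreasing on $[\beta,\infty)$. The bound of at most two zeros is then immediate. Your second-order expansions at $\alpha$ and $\beta$, and the third-order expansion at the degenerate point when $d=2$, become unnecessary.
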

\begin{proof}
Let $Q = Q_{p,d}$ and let $q=1-p$. Consider the function $h$ defined by $h=\frac{Q''}{Q'}$. A direct computation shows that $f'=f\big(h-f\big)$. Since $f(x)>0$ for all $x \in \R$, the zeros of $f'$ coincide with the zeros of $f-h$. Further, define
\[
F \colonequals Q\big(f-h\big).
\]
Since $Q(x)>0$ for all $x\in\R$, the functions $F$ and $f-h$ have the same zeros, and $f'$ and $f-h$ have the same zeros as well.
Thus it suffices to show that $F$ has at most two zeros on $\R$.

Using that $Q \cdot f=Q'$, we can write
$
F(x)=Q'(x)-Q(x)\,h(x).
$
Differentiating and using $h=Q''/Q'$ gives
\begin{equation}\label{eq:Fprime}
F'(x)=-Q(x)\,h'(x).
\end{equation}
Next, we now introduce the ratio
\[
k(x)\colonequals \frac{\varphi(x-d)}{\varphi(x)}
=\exp\Big(d x-\frac{d^2}{2}\Big).
\]
It follows from a calculation that the derivative of $h$ can be expressed in terms of $p,q,d$ and $k(x)$ via
\begin{equation}\label{eq:hprime-explicit}
h'(x)=-1+\frac{p q\,d^2\,k(x)}{(p+q\,k(x))^2}.
\end{equation}
Consider the function $S:(0,\infty)\to(0,\infty)$ and its derivative $S'$ given by
\[
S(u)\colonequals \frac{u}{(p+q u)^2}, \quad S'(u)=\frac{p-q u}{(p+q u)^3}.
\]
Then $S$ is strictly increasing on $(0,p/q)$ and strictly decreasing on $(p/q,\infty)$,
with a unique maximum at $u=p/q$, satisfying
\[
\max_{u>0} S(u)=S(p/q)=\frac{1}{4pq}.
\]
Since $k:\R\to(0,\infty)$ is a strictly increasing bijection, the map
$x\mapsto S(k(x))$ is strictly increasing up to the unique point $x_0$
where $k(x_0)=p/q$ and strictly decreasing thereafter. Hence the second term
in \eqref{eq:hprime-explicit} satisfies
\[
\max_{x\in\R}\frac{p q\,d^2\,k(x)}{(p+q\,k(x))^2}
=pq\,d^2 \max_{u>0}S(u)=\frac{d^2}{4}.
\]
Consequently we have the following:
\begin{itemize}
\item If $d<2$, then $h'(x)<0$ for all $x$.
\item If $d=2$, then $h'(x)\le 0$ for all $x$, and $h'(x)=0$ iff $x=x_0$.
\item If $d>2$, then $h'$ has exactly two distinct zeros $\alpha<\beta$ and
\begin{equation}\label{eq:hprime-sign}
h'(x)<0\ (x<\alpha),\qquad
h'(x)>0\ (\alpha<x<\beta),\qquad
h'(x)<0\ (x>\beta),
\end{equation}
\end{itemize}
where we used that $h'(x)\to -1$ as $x\to\pm\infty$.

\smallskip
Next we observe that
\begin{equation}\label{eq:Fminusinf}
\lim_{x \to -\infty} F(x)=0
\end{equation}
Moreover, from \eqref{eq:Fprime}, $F'(x)=-Q(x)h'(x)$ and $Q(x)>0$, so $F'$ has the opposite sign of $h'$. According to the above discussion on the sign-behavior of $h'$, there exists $\alpha \in \R$ such that $F$ is strictly increasing on $(-\infty,\alpha)$ (for every $d$). From this and the limit in $\eqref{eq:Fminusinf}$, it follows that $F>0$ on $(-\infty,\alpha)$.

Combining the latter with the sign information above we obtain:

\begin{itemize}
\item If $d<2$, then $h'<0$ everywhere, hence $F'>0$ everywhere and $F$ is strictly increasing. Thus, $F(x)>0$ for all $x\in\R$, hence $F$ has no zeros.
\item If $d=2$, then $h'\le 0$ everywhere, so $F'\ge 0$ and $F$ is nondecreasing. Moreover,
$F'(x)>0$ except at most one point, hence $F$ is strictly increasing. Thus, $F(x)>0$ for all $x\in\R$.
\item If $d>2$, let $\alpha<\beta$ be as in \eqref{eq:hprime-sign}. Then $F'$ is positive on
$(-\infty,\alpha)$, negative on $(\alpha,\beta)$ and positive on $(\beta,\infty)$, so $F$ has at most two zeros on $\R$.
\end{itemize}
\end{proof}

\begin{proof}[Proof of Theorem \ref{thm:main3}(1)]
    Let $f \coloneqq (\log Q)'$ and let $r^* > 0$ be the unique value such that $Q(r^*)=\frac12$. Suppose $f$ had three distinct minimizers in $[0,r^*]$,
say $0\le a<b<c\le r^*$. Then $b\in(0,r^*)$ is an interior minimizer,
hence $f'(b)=0$. Also, since $f(a)=f(b)$ and $f$ is continuous on $[a,b]$
and differentiable on $(a,b)$, Rolle's theorem yields
$\xi_1\in(a,b)$ such that $f'(\xi_1)=0$. Similarly, from $f(b)=f(c)$
we get $\xi_2\in(b,c)$ with $f'(\xi_2)=0$. Thus $f'$ would have three
distinct zeros $\xi_1<b<\xi_2$, a contradiction.
\end{proof}

\begin{proof}[Proof of Theorem \ref{thm:main3}(2)]
The uniqueness of the Cheeger set for $p=\frac12$ was discussed in Theorem \ref{thm:main2}. By symmetry, it suffices to consider the case $p \in (0,\frac12)$.

\smallskip
Let $q=1-p$, let $Q=Q_{p,d}$, let $f=f_{p,d}=(\log Q)'$, and let $r^*=r^*(p,d)$ be defined by $Q(r^*)=\frac12$. Since $p<\frac12$, we have
\begin{equation}\label{eq:iddd}
    Q(\tfrac d2)-\tfrac12
    =
    \bigl(\Phi(\tfrac d2)-\tfrac12\bigr)(2p-1)
    <0.
\end{equation}
Thus $r^*>\frac d2$. Moreover, $Q'_{p,d}(\frac d2)=\varphi(\frac d2)$, and from \eqref{eq:iddd} we obtain
$$
Q_{p,d}(\tfrac d2)
=
q+(2p-1)\Phi(\tfrac d2)
>
q+(2p-1)=p,
$$
where we used $2p-1<0$. Therefore,
\begin{equation}\label{eq:f-at-half-upper}
f_{p,d}(d/2)
=
\frac{Q'_{p,d}(d/2)}{Q_{p,d}(d/2)}
\leq
\frac{\varphi(d/2)}{p}
=
\frac{1}{p\sqrt{2\pi}}e^{-\frac{d^2}{8}}.
\end{equation}
Also, since $Q(0)\leq \frac12$, we have
$$
f_{p,d}(0)
=
\frac{Q'_{p,d}(0)}{Q_{p,d}(0)}
\geq
2p\varphi(0)
=
\frac{2p}{\sqrt{2\pi}}.
$$

\smallskip
We next bound $f_{p,d}(r^*)$ from below. Define the right-tail
$$
\varepsilon_d=1-\Phi(r^*)\in(0,1).
$$
Using $Q_{p,d}(r^*)=\frac12$, we obtain
$$
q\Phi(r^*-d)
=
\frac12-p\Phi(r^*)
=
\frac12-p(1-\varepsilon_d)
=
\Bigl(\frac12-p\Bigr)+p\varepsilon_d.
$$
Hence
\begin{equation}\label{eq:Phi-shift-identity}
\Phi(r^*-d)
=
\frac{\frac12-p}{q}+\frac{p}{q}\varepsilon_d
=
\alpha_p+\frac{p}{q}\varepsilon_d,
\end{equation}
where
$$
\alpha_p=\frac{\frac12-p}{q}\in(0,\tfrac12).
$$
Let
$$
t_p=-\Phi^{-1}(\alpha_p)>0.
$$
Then $\Phi(r^*-d)\geq \alpha_p=\Phi(-t_p)$, and therefore $r^*-d\geq -t_p$. Since $r^*\leq d$ by Lemma \ref{lma:rstar_location}, it follows that $\varphi(r^*-d)\geq \varphi(t_p)$. Consequently,
$$
f_{p,d}(r^*)
=
2Q'_{p,d}(r^*)
\geq
2q\varphi(r^*-d)
\geq
2q\varphi(t_p)
=
\frac{2q}{\sqrt{2\pi}}e^{-\frac{t_p^2}{2}}.
$$

Combining these estimates shows that $f_{p,d}(d/2)<f_{p,d}(0)$ and $f_{p,d}(d/2)<f_{p,d}(r^*)$ provided that
$$
d>
\max\left\{
\sqrt{8\log\frac{1}{2p^2}},
\sqrt{4t_p^2+8\log\frac{1}{2pq}}
\right\}.
$$
Thus one may choose
$$
c(p)
=
1+\max\left\{
\sqrt{8\log\frac{1}{2p^2}},
\sqrt{4t_p^2+8\log\frac{1}{2pq}}
\right\}.
$$
For every $d\geq c(p)$, the point $d/2\in[0,r^*]$ has strictly smaller value of $f_{p,d}$ than both endpoints $0$ and $r^*$; hence neither endpoint can be a minimizer.

\smallskip
Since both $0$ and $r^*$ are not minimizers, every minimizer lies in the open interval $(0,r^*)$. If there were two minimizers, then $f'$ would have two distinct zeros in $(0,r^*)$ and, by Rolle's theorem, also a third zero, contradicting Lemma \ref{lma:number_of_zeros}.
\end{proof}

\section*{Acknowledgments}

The author is grateful to the Azrieli Foundation for the award of an Azrieli Fellowship and acknowledges the support of this research by ISF Grant No.~854/25 and ISF Grant No.~1044/21. Moreover, the author thanks the anonymous referee for valuable comments.

\bibliographystyle{abbrv}
\bibliography{bibfile}

@article{klartag2025isoperimetric,
  title={Isoperimetric inequalities in high-dimensional convex sets},
  author={Klartag, Boaz and Lehec, Joseph},
  journal={Bull. Amer. Math. Soc.},
  volume={62},
  number={4},
  pages={575--642},
  year={2025}
}

@article{carlen2001cases,
  title={{On the cases of equality in Bobkov's inequality and Gaussian rearrangement}},
  author={Carlen, E. A. and Kerce, C.},
  journal={Calc. Var. Partial Differential Equations},
  volume={13},
  pages={1--18},
  year={2001},
  publisher={Springer}
}

@InProceedings{ledoux,
author="Ledoux, Michel",
editor="Eberlein, Ernst
and Hahn, Marjorie
and Talagrand, Michel",
title="A Short Proof of the Gaussian Isoperimetric Inequality",
booktitle="High Dimensional Probability",
year="1998",
publisher="Birkh{\"a}user Basel",
address="Basel",
pages="229--232",
isbn="978-3-0348-8829-5"
}

@article{sudakov1978,
author = {Sudakov, V.N. and Tsirel'son, B.S.},
title = "Extremal properties of half-spaces for spherically invariant measures",
journal = "J. Math. Sci.",
volume = "9",
pages = "9--18",
year = "1978",
url = "https://doi.org/10.1007/BF01086099",
doi = "10.1007/BF01086099"
}

@article{borell2003,
title = {{The Ehrhard inequality}},
journal = "C. R. Math. Acad. Sci. Paris",
volume = "337",
number = "10",
pages = "663 - 666",
year = "2003",
issn = "1631-073X",
doi = "https://doi.org/10.1016/j.crma.2003.09.031",
url = "http://www.sciencedirect.com/science/article/pii/S1631073X03004461",
author = "C. Borell"
}

@article{cordero2004b,
  title={The (B) conjecture for the Gaussian measure of dilates of symmetric convex sets and related problems},
  author={Cordero-Erausquin, Dario and Fradelizi, Matthieu and Maurey, Bernard},
  journal={J. Funct. Anal.},
  volume={214},
  number={2},
  pages={410--427},
  year={2004},
  publisher={Elsevier}
}

@article{Ehrhard1983,
author = {Ehrhard, Antoine},
journal = {Math. Scand.},
pages = {281-301},
title = {Symétrisation dans l'espace de Gauss.},
url = {http://eudml.org/doc/166873},
volume = {53},
year = {1983}
}

@inproceedings{barthe2000some,
  title={Some remarks on isoperimetry of Gaussian type},
  author={Barthe, Franck and Maurey, Bernard},
  booktitle={Annales de l'Institut Henri Poincare (B) Probability and Statistics},
  volume={36},
  pages={419--434},
  year={2000},
  organization={Elsevier}
}

@article{borell1974,
author = "Borell, C.",
doi = "10.1007/BF02384761",
journal = "Ark. Mat.",
number = "1-2",
pages = "239--252",
publisher = "Institut Mittag-Leffler",
title = "Convex measures on locally convex spaces",
url = "https://doi.org/10.1007/BF02384761",
volume = "12",
year = "1974"
}

@InProceedings{liKuelbs,
author="Li, W. V.
and Kuelbs, J.",
editor="Eberlein, Ernst
and Hahn, Marjorie
and Talagrand, Michel",
title="{Some Shift Inequalities for Gaussian Measures}",
booktitle="High Dimensional Probability",
year="1998",
publisher="Birkh{\"a}user Basel",
address="Basel",
pages="233--243"
}

@article{Oleszkiewicz,
author = {Rafał Latała and Krzysztof Oleszkiewicz},
title = {{Gaussian Measures of Dilatations of Convex Symmetric Sets}},
volume = {27},
journal = {Ann. Probab.},
number = {4},
publisher = {Institute of Mathematical Statistics},
pages = {1922 -- 1938},
year = {1999},
doi = {10.1214/aop/1022874821},
URL = {https://doi.org/10.1214/aop/1022874821}
}

@article{latala2002,
	author = {R. Latała},
	title = {{On some inequalities for Gaussian measures}},
	journal = {Proceedings of the ICM, Beijing},
	volume = {2},
	pages = {813--822},
	year = {2002}
}

@article{grohs2021stables,
  title={Stable Gabor phase retrieval for multivariate functions},
  author={Grohs, Philipp and Rathmair, Martin},
  journal={J. Eur. Math. Soc. (JEMS)},
  volume={24},
  number={5},
  pages={1593--1615},
  year={2021}
}

@article{chafaiMalrieu,
	author = {Djalil Chafaï and Florent Malrieu},
	title = {{On fine properties of mixtures with respect to concentration of measure and Sobolev type inequalities}},
	journal = {Ann. Inst. H. Poincaré Probab. Statist.},
	volume = {46},
	issue = {1},
	year = {2010},
	pages = {72--96},
	doi = {10.1214/08-AIHP309}
}

@article{schlichting,
	author = {Schlichting, A.},
	title = {{Poincaré and Log–Sobolev Inequalities for Mixtures}},
	journal = {Entropy},
	volume = {21},
	issue = {89},
	year = {2019},
	doi = {10.3390/e21010089},
	url = {http://doi.org/10.3390/e21010089}
}

@article{berry2018,
	author = {John Berry and Matthew Dannenberg and Jason Liang and Yingyi Zeng},
	title = {{The isoperimetric problem in the plane with the sum of two Gaussian densities}},
	journal = {Involve},
	volume = {11},
	number = {4},
	pages = {549--567},
	year = {2018},
	doi ={10.2140/involve.2018.11.549},
	url = {https://doi.org/10.2140/involve.2018.11.549}
}

@article{grohsRathmair,
	author = {P. Grohs and M. Rathmair},
	title = {{Stable Gabor Phase Retrieval and Spectral Clustering}},
	journal = {Comm. Pure Appl. Math.},
	volume = {72},
	number = {5},
	pages = {981--1043},
	year = {2019},
	doi ={10.1002/cpa.21799},
	url = {https://doi.org/10.1002/cpa.21799}
}

@article{alaifari2025cheeger,
  title={{Cheeger's constant for the Gabor transform and ripples}},
  author={Alaifari, Rima and Pineau, Ben and Taylor, Mitchell A and Wellershoff, Matthias},
  journal={arXiv:2512.18058},
  year={2025}
}

@article{fuhr2025cheeger,
  title={{Cheeger Bounds for Stable Phase Retrieval in Reproducing Kernel Hilbert Spaces}},
  author={F{\"u}hr, Hartmut and Getter, Max},
  journal={arXiv:2512.24169},
  year={2025}
}

@article{kuelbs1994gaussian,
  title={{The Gaussian measure of shifted balls}},
  author={Kuelbs, James and Li, Wenbo V and Linde, Werner},
  journal={Probab. Theory Related Fields},
  volume={98},
  number={2},
  pages={143--162},
  year={1994},
  publisher={Springer}
}

@article{chambers2019,
	author = {Gregory R. Chambers},
	title = {{Proof of the Log-Convex Density Conjecture}},
	journal = {J. Eur. Math. Soc. (JEMS)},
	volume = {21},
	pages = {2301--2332},
	year = {2019},
	doi ={10.4171/JEMS/885},
	url = {https://doi.org/10.4171/JEMS/885}
}

@article{cianchi2011,
author = {Cianchi, Andrea and Fusco, N and Maggi, F and Pratelli, And},
year = {2011},
pages = {131--186},
title = {{On the isoperimetric deficit in Gauss space}},
volume = {133},
number = {1},
journal = {Amer. J. Math.},
doi = {10.1353/ajm.2011.0005},
url = {https://doi.org/10.1353/ajm.2011.0005}
}

@article{fusco2011isoperimetric,
  title={{On the isoperimetric problem with respect to a mixed Euclidean-Gaussian density}},
  author={Fusco, Nicola and Maggi, Francesco and Pratelli, Aldo},
  journal={J. Funct. Anal.},
  volume={260},
  number={12},
  pages={3678--3717},
  year={2011},
  publisher={Elsevier}
}

@book{maggi2012sets,
  title={{Sets of finite perimeter and geometric variational problems: an introduction to Geometric Measure Theory}},
  author={Maggi, Francesco},
  volume={135},
  year={2012},
  publisher={Cambridge University Press}
}

@article{ambrosio2016minkowski,
  title={{Perimeter as relaxed Minkowski content in metric measure spaces}},
  author={Ambrosio, Luigi and Di Marino, Simone and Gigli, Nicola},
  journal={Nonlinear Anal.},
  volume={153},
  pages={78--88},
  year={2017},
  publisher={Elsevier}
}

@article{bobkov1996functional,
  title={{A functional form of the isoperimetric inequality for the Gaussian measure}},
  author={Bobkov, Sergey},
  journal={J. Funct. Anal.},
  volume={135},
  number={1},
  pages={39--49},
  year={1996},
  publisher={Elsevier}
}

@article{mosselNeeman2015,
author = {Elchanan Mossel and Joe Neeman},
title = {{Robust dimension free isoperimetry in Gaussian space}},
volume = {43},
journal = {Ann. Probab.},
number = {3},
publisher = {Institute of Mathematical Statistics},
pages = {971 -- 991},
year = {2015},
doi = {10.1214/13-AOP860},
URL = {https://doi.org/10.1214/13-AOP860}
}

@article{BROCK2016375,
title = {{An isoperimetric inequality for Gauss-like product measures}},
journal = {J. Math. Pures Appl.},
volume = {106},
number = {2},
pages = {375-391},
year = {2016},
issn = {0021-7824},
doi = {https://doi.org/10.1016/j.matpur.2016.02.014},
url = {https://www.sciencedirect.com/science/article/pii/S0021782416000155},
author = {F. Brock and F. Chiacchio and A. Mercaldo}
}

@article{bobkov1997some,
  title={{Some connections between Sobolev-type inequalities and isoperimetry}},
  author={Bobkov, Serguei G and Houdr{\'e}, Ch},
  journal={Mem. Amer. Math. Soc},
  volume={616},
  year={1997}
}

@article{liehrFollowUp,
  title={{On the uniqueness of Cheeger sets for Gaussian mixtures}},
  author={Lukas Liehr},
  journal={in preparation},
  year={2026}
}

@article{betta2008,
author = {Betta, M. Francesca and Brock, Friedemann and Mercaldo, Anna and Posteraro, M. Rosaria},
title = {{Weighted isoperimetric inequalities on {$\R^n$} and applications to rearrangements}},
journal = {Math. Nachr.},
volume = {281},
number = {4},
pages = {466-498},
doi = {10.1002/mana.200510619},
url = {https://onlinelibrary.wiley.com/doi/abs/10.1002/mana.200510619},
eprint = {https://onlinelibrary.wiley.com/doi/pdf/10.1002/mana.200510619},
year = {2008}
}

@article{canete2010,
	Author = {Ca{\~n}ete, Antonio and Miranda, Michele and Vittone, Davide},
	Doi = {10.1007/s12220-009-9109-4},
	Journal = {J. Geom. Anal.},
	Number = {2},
	Pages = {243--290},
	Title = {{Some Isoperimetric Problems in Planes with Density}},
	Url = {https://doi.org/10.1007/s12220-009-9109-4},
	Volume = {20},
	Year = {2010}
	}

@article{rosales2008,
	Author = {Rosales, C{\'e}sar and Ca{\~n}ete, Antonio and Bayle, Vincent and Morgan, Frank},
	Doi = {10.1007/s00526-007-0104-y},
	Journal = {Calc. Var. Partial Differential Equations},
	Number = {1},
	Pages = {27--46},
	Title = {{On the isoperimetric problem in Euclidean space with density}},
	Url = {https://doi.org/10.1007/s00526-007-0104-y},
	Volume = {31},
	Year = {2008}
	}

@article{marini,
  TITLE = {{On a class of weighted Gauss-type isoperimetric inequalities and applications to symmetrization}},
  AUTHOR = {Marini, Michele and Ruffini, Berardo},
  URL = {https://hal.archives-ouvertes.fr/hal-00941120},
  JOURNAL = {{Rend. Semin. Mat. Univ. Padova}},
  HAL_LOCAL_REFERENCE = {GT},
  PUBLISHER = {{University of Padua  / European Mathematical Society}},
  VOLUME = {133},
  PAGES = {197--214},
  YEAR = {2015},
  DOI = {10.4171/RSMUP/133-10},
  HAL_ID = {hal-00941120},
  HAL_VERSION = {v2}
}

@article{Betta1999_2,
author = {Betta, M.F. and Brock, F. and Mercaldo, A. and Possteraro, M.R.},
journal = {J. Inequal. Appl.},
language = {eng},
number = {3},
pages = {215-240},
publisher = {Springer International Publishing},
title = {A weighted isoperimetric inequality and applications to symmetrization.},
url = {http://eudml.org/doc/123150},
volume = {4},
year = {1999}
}

\end{document}